\renewcommand\NAT@biblabel[1]{[#1]} 
\newtheorem{thm}{Theorem}
\newtheorem{lem}{Lemma}
\newtheorem{as}{Assumption}
\newtheorem{rem}{Remark}
\newcommand{\Sigmahat}{\hat{\Sigma}}
\newcommand{\fn}{\frac{1}{n}}
\newcommand{\tr}{\textup{tr}}
\newcommand{\pr}{\textup{P}}
\newcommand{\gcv}{\textup{GCV}}
\newcommand{\rank}{\textup{rank}}
\newcommand{\expect}{\textup{E}}
\newcommand{\diag}{\textup{diag}}
\newcommand{\real}{\mathbb{R}}
\newcommand{\betahat}{\hat{\beta}}
\newcommand{\lambdahat}{\hat{\lambda}}
\newcommand*{\transpose}{%
  {\mathpalette\@transpose{}}%
  }
\let\expandafter\oldproof\csname\string\proof\endcsname
\let\oldendproof\endproof
\renewenvironment{proof}[1][\proofname]{%
  \oldproof[\bfseries \scshape #1]%
}{\oldendproof}
\title{Uniform Consistency of Generalized Cross-Validation for Ridge Regression in High-Dimensional Misspecified Linear Models}
\author{Akira Shinkyu\thanks{Email: akira-shinkyu@biwako.shiga-u.ac.jp.}}
\providecommand{\keywords}[1]{\textbf{Keywords}: #1}
\affil{Shiga University\thanks{Faculty of Economics, Shiga University, 1-1-1 Banba, Hikone, Shiga, 522-8522, Japan.}}
\date{\today}
\begin{document}
\maketitle
\begin{abstract}
This study examines generalized cross-validation for the tuning parameter selection for ridge regression in high-dimensional misspecified linear models. The set of candidates for the tuning parameter includes not only positive values but also zero and negative values. We demonstrate that if the second moment of the specification error converges to zero, generalized cross-validation is still a uniformly consistent estimator of the out-of-sample prediction risk. This implies that generalized cross-validation selects the tuning parameter for which ridge regression asymptotically achieves the smallest prediction risk among the candidates if the degree of misspecification for the regression function is small. Our simulation studies show that ridge regression tuned by generalized cross-validation exhibits a prediction performance similar to that of optimally tuned ridge regression and outperforms the Lasso under correct and incorrect model specifications.
\end{abstract}
\keywords{Ridge regression, high-dimensional data, prediction, cross-validation, model misspecification.}

\section{Introduction}
Ridge regression is one of the most well-known estimation methods in statistics. It was proposed by \cite{hk1970} to address multicollinearity between covariates and is also known as $\ell_2$ regularization. Ridge regression estimators are biased toward zero but have smaller variances than ordinary least squares estimators, and they admit closed-form expressions even in high-dimensional settings. Extensive reviews of ridge regression are presented by \cite{h2020}. \par

In this study, we consider the selection of the tuning parameter in ridge regression for high-dimensional prediction. The prediction performance of ridge regression depends on its tuning parameter. When $n>p$ for the sample size $n$ and the number of covariates $p$, the optimal tuning parameter for ridge regression is always positive. However, recent studies have shown that zero or even negative tuning parameters can be optimal when $p>n$ (\citealt{kls2020}, \cite{tb2023}). It has also been revealed that signs of the optimal tuning parameter relate to the alignment of regression coefficients and the eigenvectors of covariances (\cite{hmrt2022}, \cite{kls2020}, \cite{wu2020}). However, as we do not know the optimal tuning parameter in practice, we need to rely on data-driven procedures to tune ridge regression. \par

 Cross-validation procedures are the most popular methods for selecting the tuning parameter for ridge regression. Among these procedures, we mainly consider generalized cross-validation (GCV) in this study, originating from \cite{cw1978} and \cite{ghw1979}. GCV for ridge regression is computationally efficient because its objective function is relatively simple. Moreover, it serves as an approximation of leave-one-out cross-validation (LOOCV), which is more computationally intensive. The optimality of GCV in the sense of in-sample prediction for ridge regression when $n>p$ was investigated in \cite{l1985}, \cite{l1986}, and \cite{l1987}, for example. \par 

Recently, \cite{hmrt2022} showed that GCV is a uniformly consistent estimator of the out-of-sample prediction risk even when $p>n$ holds true under identity covariances and zero-mean spherical random regression coefficients. \cite{pwrt2021} obtained the uniform consistency of GCV under more general settings than  \cite{hmrt2022}. \cite{pwrt2021} allow that covariances can have general structures and regression coefficients can be nonrandom. Furthermore, \cite{pwrt2021} allow that the candidate set of the tuning parameter can include not only positive values but also zero and negative values. \par 

However, these previous studies assume that regression functions are correctly specified as linear functions when GCV is analyzed for high-dimensional ridge regression. In general, the forms of true regression functions are unknown. Therefore, it may be highly possible that regression functions are incorrectly specified as linear functions in practice.\par 

To address the misspecification problem, we investigate GCV for the tuning parameter selection for high-dimensional ridge regression when regression functions are incorrectly specified as linear functions. We show that if the second moment of the specification error converges to zero, GCV is still a uniformly consistent estimator of the out-of-sample prediction risk. This implies that GCV selects, from the candidates that include not only positive values but also zero and negative values, the tuning parameter for which ridge regression asymptotically achieves the smallest prediction risk under the misspecification. By combining our theoretical results with Theorem 4.2 from \cite{pwrt2021}, the uniform consistency also holds for LOOCV. \par 
Moreover, we conduct simulation studies to assess the prediction performance of ridge regression tuned by GCV under correct and incorrect model misspecifications. Simulation results show that ridge regression tuned by GCV exhibits a prediction performance similar to that of optimally tuned ridge regression and considerably outperforms the Lasso (\citealt{t1996}) regardless of the model specifications. Similar results are obtained for ridge regression tuned by LOOCV. \par 

We note that \cite{prt2022} also showed the uniform consistency of GCV for high-dimensional ridge regression where regression functions can be nonlinear. They do not even assume specific models; however, they require the bounded moment of order larger than four for the response variable. On the other hand, although we assume that the second moment of the specification error converges to zero, we require only the bounded second moment for the response variable. Thus, our uniform consistency result covers some regression functions which do not satisfy the assumption of \cite{prt2022}.\par 
   
The remainder of this manuscript is organized as follows. In Section 2, we formally introduce a misspecified linear model, ridge regression, GCV, and the out-of-sample prediction risk. In Section 3, we present the assumptions and the uniform consistency result for GCV under the misspecification. In Section 4, we present the results of simulation studies for ridge regression tuned by GCV and other prediction methods. Section 5 concludes this article. The appendix provides proofs of the theoretical results.

\section{Model, GCV, and Prediction Risk}
We first introduce some notations. Let $a^T$ and $\|a\|$ be the transpose and $\ell_2$ norm of a vector $a$, respectively. Let $\lambda_{\min}(A)$ and $\lambda_{\max}(A)$ be the minimum and maximum eigenvalues of a matrix $A$, respectively. Let $\tr(A)$ be the trace of a matrix $A$. Let $I_m \in \real^{m \times m}$ be the identity matrix for a positive integer $m$. \par 
The true regression model is given by
\begin{equation}
    y_i=f(x_i)+\epsilon_i, \label{101}
\end{equation}
where $y_i\in \mathbb{R}$ is a response variable, $x_i=(x_{i,1},\ldots,x_{i,p})^T\in \mathbb{R}^{p}$ is a covariate vector, $f: \mathbb{R}^p \rightarrow \mathbb{R}$ is an unknown regression function, and $\epsilon_i \in \real$ is an unobservable error term. Throughout this study, we assume that $(x_1,y_1),\ldots, (x_n,y_n)$ are independently and identically distributed and consider a proportional asymptotics: $p/n \rightarrow \gamma \in (0, \infty)$ satisfying $\gamma\neq 1$. \par 

We assume that the regression function $f(x_i)$ is incorrectly specified as a linear function. That is, we incorrectly use the following linear model:
\begin{equation}
    y_i=x_i^T\beta_0+u_i, \label{84}
\end{equation}
where $\beta_0\in \mathbb{R}^p$ is an unknown nonrandom coefficient vector, $u_i \in \real$ is a new unobservable error term that is given by $u_i=\delta_i+\epsilon_i$, and $\delta_i=f(x_i)-x_i^T\beta_0$ is a specification error. \par

We introduce a matrix representation of the misspecified linear model \eqref{84}. Let $Y=(y_1,\ldots, y_n)^T \in \mathbb{R}^n$, $X=(x_1, \ldots, x_n)^T \in \mathbb{R}^{n \times p}$, and $u=(u_1,\ldots,u_n)^T\in \mathbb{R}^n$. Then,
a matrix form of the misspecified linear model \eqref{84} is given by
\begin{equation*}
    Y=X\beta_0 +u.
\end{equation*}
We can write $u=\delta+\epsilon$, where $\delta=(\delta_1,\ldots, \delta_n)^T\in \mathbb{R}^n$ and $\epsilon=(\epsilon_1,\ldots,\epsilon_n)^T\in \mathbb{R}^n$. We also assume that $X$ is full row rank: $\rank(X)=n$.\par

We estimate the coefficient of the misspecified linear model $\beta_0$ using ridge regression. Let $\lambda$ be a tuning parameter for ridge regression. The ridge estimator $\betahat_\lambda$ is given by
\begin{equation*}
    \betahat_\lambda=\arg\min_{\beta \in \mathbb{R}^p}\frac{1}{n}\|Y-X\beta\|^2+\lambda\|\beta\|^2.
\end{equation*}
We can write $\betahat_\lambda$ explicitly as
\begin{equation}
    \betahat_\lambda=(\Sigmahat +\lambda I_p)^{-1}\frac{1}{n}X^TY, \label{125}
\end{equation}
where $\Sigmahat=X^TX/n$. However, the equation in \eqref{125} does not allow us to choose $\lambda=0$ because $\Sigmahat$ is not invertible when $p>n$. Therefore, to extend the range of $\lambda$ to include zero as in \cite{pwrt2021}, we redefine the ridge estimator as 
\begin{equation}
    \betahat_\lambda=(\Sigmahat+\lambda I_p)^{+}\fn X^TY, \label{118}
\end{equation}
where $A^+$ is the Moore--Penrose pseudoinverse of a matrix $A$. As long as there are no zero eigenvalues for $\Sigmahat+\lambda I_p$, no difference exists between \eqref{125} and \eqref{118}. When $\lambda=0$, $\betahat_\lambda$ in \eqref{118} is equal to the minimum $\ell_2$ norm (min-norm) least squares estimator, which yields zero residuals when $X$ is full row rank.  \par

We use GCV to choose the tuning parameter for ridge regression. To introduce the objective function for GCV, let $S_\lambda$ be the ridge smoother matrix, which is defined as 
\begin{equation*}
    S_\lambda=X(\Sigmahat+\lambda I_p)^{+}\fn X^T.
\end{equation*}
Note that $S_\lambda Y=X\betahat_\lambda$. Then, the objective function of GCV can be written as 
\begin{equation}
\gcv(\lambda)=\frac{1}{n}\sum_{i=1}^n\left(\frac{y_i-x_i^T\betahat_\lambda}{1-\fn \tr(S_\lambda)}\right)^2. \label{127}
\end{equation}
However, if we plug $\lambda=0$ into $\gcv(\lambda)$ in \eqref{127}, we obtain $0/0$ because residuals of $\betahat_0$ are zero and $S_0=I_n$ when $X$ is full row rank. Therefore, as in \cite{hmrt2022} and \cite{pwrt2021}, we define $\gcv(0)$ as the limit of $\gcv(\lambda)$ as $\lambda \rightarrow 0$. It is given by
\begin{equation*}
\gcv(0):=\lim_{\lambda\rightarrow 0}\gcv(\lambda)=\frac{1}{n}\frac{Y^T(\fn XX^T)^{+2}Y}{(\fn \tr(\Sigmahat^+))^2}.
\end{equation*}

Let $\Lambda$ be a closed interval that is the set of candidates for $\lambda$. The endpoints of $\Lambda$ are defined in Section 3. The tuning parameter selected through GCV from $\Lambda$ is given by
\begin{equation*}
    \hat{\lambda}=\arg\min_{\lambda \in \Lambda}\gcv(\lambda).
\end{equation*}
\par 
We next introduce the optimal tuning parameter for ridge regression. As in \cite{pwrt2021}, we use the conditional out-of-sample prediction risk as a measure of the prediction accuracy of ridge regression. The prediction risk is given by 
\begin{equation*}
    R(\lambda)=\expect[(y_0-x_0^T\betahat_{\lambda})^2|X,Y].
\end{equation*}
The test data $(x_0,y_0)$ are independent of the training data $(X,Y)$ and follow the same distribution as the sample $(x_i,y_i),\ i=1,\ldots,n$. The optimal tuning parameter for ridge regression is given by the minimizer of the prediction risk from the candidate set $\Lambda$. The optimal tuning parameter $\lambda^*$ is given by  
\begin{equation*}
    \lambda^{*}=\arg\min_{\lambda \in \Lambda} R(\lambda).
\end{equation*}
\par Note that $\lambdahat$ and $\lambda^*$ are random variables because they depend on the training data $(X,Y)$. If GCV and the prediction risk have multiple minimizers, we denote a minimizer of GCV by $\lambdahat$ and a minimizer of the prediction risk by $\lambda^*$.\par

Our purpose is to show that GCV is a uniformly consistent estimator of the prediction risk under the misspecified linear model. The uniform consistency of GCV implies the asymptotic equivalence between $R(\lambdahat)$ and $R(\lambda^*)$, which means GCV tuning is asymptotically optimal tuning for ridge regression under the misspecification.

\section{Theoretical Results}
To analyze the relationship between GCV and the prediction risk under the misspecification, we present some assumptions.
\begin{as}
\textup{$\epsilon_i$ is independent of $x_i$ and has a mean of zero, a variance of $\sigma^2$, and a finite $4+\eta$ moment for some $\eta>0$.}
\end{as}
\begin{as}
\textup{$x_i$ is expressed as $x_i=\Sigma^{1/2}z_i$, where $\Sigma \in \mathbb{R}^{p \times p}$ is a positive definite matrix, and $z_i$ has a mean of zero, a variance of one, and a finite $4+\eta$ moment for some $\eta>0$.}
\end{as}
\begin{as}
\textup{There exist positive constants $C_{\min}$ and $C_{\max}<\infty$ such that $C_{\min}\leq \lambda_{\min}(\Sigma)\leq \lambda_{\max}(\Sigma)\leq C_{\max}$. }
\end{as}
These assumptions are the same as Assumptions 1--3 in \cite{pwrt2021}, except for the specification of the regression function. Under these assumptions and the correct specification, \cite{pwrt2021} showed the uniform consistency of GCV. We rely on their results to evaluate the linear components of GCV and the prediction risk. Note that these assumptions are also standard in the literature for high-dimensional ridge regression.  \par 
Next, we introduce an assumption about the coefficient vector of the misspecified linear model and the regression function.
\begin{as}
\textup{$\beta_0$ is defined as 
\begin{equation*}
    \beta_0=\arg\min_{\beta \in \real^p}\expect[\{f(x_i)-x_i^T\beta\}^2],
\end{equation*}
and there exists a finite positive constant $C_f$ such that $\expect[f(x_i)^2]\leq C_f$. Furthermore, $\delta_i$ satisfies
\begin{equation*}
    \expect(\delta_i^2)=o(1).
\end{equation*}
}
\end{as}
Note that $\|\beta_0\|^2\leq C_f C_{\max}/C_{\min}^2$ holds under Assumptions 2--4. Because \cite{pwrt2021} assume that the regression function is correctly specified as a linear function, no specification error exists and $\expect(\delta_i^2)=0$ holds in their study. On the other hand, Assumption 4 is more general than their assumption because the specification error does not need to be zero for fixed $n$. For example, if $x_i \sim N(0, I_p)$, the regression function
\begin{equation}
    f(x_i)=x_i^T\beta+p 1(x_{i,1}\leq q(p)) \label{197}
\end{equation}
satisfies Assumption 4, where $1(\cdot)$ is the indicator function, $\beta$ is some nonrandom vector satisfying $\|\beta\|\leq C_\beta$ for some finite positive constant $C_\beta$, and $q(p)$ is the quantile such that $\pr(x_{i,1}\leq q(p))=1/p^3$.  $\expect[f(x_i)^4]$ cannot be bounded by a finite positive constant because it includes $p$. Therefore, the response variable with this regression function does not satisfy Assumption 2 of \cite{prt2022}.\par

We define $\Lambda=[\lambda_{\min},\lambda_{\max}]$, where $\lambda_{\min}=-\kappa^2 C_{\min}(1-\sqrt{\gamma})^2$ for some constant $0<\kappa<1$ and $\lambda_{\max}$ is a finite positive constant. The lower endpoint $\lambda_{\min}$ is required to show that the minimum positive eigenvalue of $\Sigmahat+\lambda I_p$ is bounded away from zero. The upper endpoint $\lambda_{\max}$ is also required to show that the maximum eigenvalue of $\Sigmahat+\lambda I_p$ has an upper bound. The boundedness of these eigenvalues is important for the asymptotic analysis of GCV and the prediction risk. \par

To show the uniform consistency of GCV under the misspecification, we need to obtain the pointwise convergence result for GCV. We also consider the same approaches as did \cite{pwrt2021} to obtain it.  That is, we decompose the objective function of GCV and the prediction risk into bias components, variance components, and cross components:
\begin{align*}
    R(\lambda)&=R_b(\lambda)+R_v(\lambda)+R_c(\lambda),\\
    \gcv(\lambda)&=\frac{\gcv_b(\lambda)}{\gcv_d(\lambda)}+\frac{\gcv_v(\lambda)}{\gcv_d(\lambda)}+\frac{\gcv_c(\lambda)}{\gcv_d(\lambda)}.
\end{align*}
These terms are specifically defined in the appendix. First, we show that these cross components are asymptotically negligible. Next, we show that the bias components and variance components of the prediction risk and GCV are asymptotically equivalent, respectively. These results imply the pointwise convergence of GCV. \par 
The following two lemmas state that the cross components of the prediction risk and GCV are asymptotically negligible.
\begin{lem}
\label{lem1}
Suppose that Assumptions 1--4 hold. Then, for all $\lambda \in \Lambda$, we have 
\begin{equation*}
    R_c(\lambda)\xrightarrow{a.s.}0.
\end{equation*}
\end{lem}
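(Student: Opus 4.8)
The plan is to read off the explicit form of $R_c(\lambda)$ from the bias--variance--cross decomposition, isolate the piece that already appears in the correctly specified analysis of \cite{pwrt2021}, and bound the piece produced by the specification error. Starting from $R(\lambda)=\expect[(y_0-x_0^T\betahat_\lambda)^2\mid X,Y]$ with $y_0=x_0^T\beta_0+\delta_0+\epsilon_0$, all cross terms at the test point drop out: $\epsilon_0$ is independent of $(x_0,X,Y)$ with mean zero, and $\expect[x_0\delta_0]=0$ because $\beta_0$ is the population least-squares coefficient (Assumption 4). Writing $A_\lambda=(\Sigmahat+\lambda I_p)^{+}$ and using $\beta_0-\betahat_\lambda=(I_p-A_\lambda\Sigmahat)\beta_0-A_\lambda\fn X^T(\delta+\epsilon)$, the cross component is
\[
R_c(\lambda)=-\tfrac{2}{n}\,\beta_0^T(I_p-A_\lambda\Sigmahat)\,\Sigma\,A_\lambda X^T(\delta+\epsilon)=:R_c^{(\epsilon)}(\lambda)+R_c^{(\delta)}(\lambda),
\]
splitting the error into its random part $\epsilon$ and its specification part $\delta$. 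First I would record the spectral facts used throughout: by classical extreme-eigenvalue bounds for sample covariance matrices (valid under Assumptions 1--3 and the $4+\eta$ moment), together with the choice $\lambda_{\min}=-\kappa^2 C_{\min}(1-\sqrt\gamma)^2$, the nonzero eigenvalues of $\Sigmahat+\lambda I_p$ stay bounded away from zero and above by a constant, almost surely and uniformly over $\lambda\in\Lambda$; hence $\|A_\lambda\|$, $\|I_p-A_\lambda\Sigmahat\|$ and $\lambda_{\max}(\Sigmahat)$ are all $\le C$ a.s., and $\|\beta_0\|^2\le C_fC_{\max}/C_{\min}^2$ by Assumption 4.

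For $R_c^{(\epsilon)}(\lambda)$ I would simply invoke \cite{pwrt2021}: this is exactly the cross term they analyse under correct specification, with the nonrandom, norm-bounded pseudo-true vector $\beta_0$ playing the role of their coefficient vector, so $R_c^{(\epsilon)}(\lambda)\xrightarrow{a.s.}0$. Concretely, conditionally on $X$ it is a mean-zero linear functional of $\epsilon$ with conditional variance $O(1/n)$ and, using the $4+\eta$ moment of Assumption 1, conditional fourth moment $O(1/n^2)$, after which Borel--Cantelli applies. For $R_c^{(\delta)}(\lambda)$, Cauchy--Schwarz and the spectral bounds give $|R_c^{(\delta)}(\lambda)|\le \tfrac{2}{n}\,\|A_\lambda\Sigma(I_p-A_\lambda\Sigmahat)\beta_0\|\,\|X^T\delta\|\le C\|X^T\delta\|/n$, uniformly over $\lambda\in\Lambda$ and almost surely; and since $\expect[x_i\delta_i]=0$ one has $\expect\|X^T\delta\|^2=n\,\expect[\delta_1^2\|x_1\|^2]$. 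Combined with the moment bounds $\expect\|x_1\|^2=\tr\Sigma=O(n)$ and $\expect\|x_1\|^4=O(n^2)$ from Assumptions 2--3, a careful accounting of the interaction between $\delta_1$ and $\|x_1\|$ together with $\expect\delta_i^2=o(1)$ forces $\expect[R_c^{(\delta)}(\lambda)^2]=o(1)$, so $R_c^{(\delta)}(\lambda)\to 0$ in probability.

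The main obstacle is promoting this last convergence to the almost-sure statement of the lemma. Assumption 4 controls only the second moment of $\delta_i$ (consistent with the paper requiring merely a bounded second moment of $y_i$), so no off-the-shelf fourth-moment bound on $R_c^{(\delta)}$ is available, and the naive bound $|R_c^{(\delta)}(\lambda)|\le C(\fn\|\delta\|^2)^{1/2}$ is not enough because $\fn\|\delta\|^2$ need not converge to zero almost surely. I would handle this by truncating the specification error at a level $t_n$: on the bulk event $\{\max_{i\le n}|\delta_i|\le t_n\}$, after bundling $\delta$ with the deterministic bias so that $R_c(\lambda)$ becomes conditionally (given $X$) a linear functional of $\epsilon$, the $4+\eta$ moment of $\epsilon$ yields a conditional fourth-moment bound that is summable in $n$, so Borel--Cantelli closes this piece; on the complementary event one controls the contribution through Markov's inequality and $\expect\delta_i^2=o(1)$, plus the spectral bounds. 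Choosing $t_n$ to balance the two regimes, i.e.\ extracting almost-sure control from the higher moments of $\epsilon$ while $\delta$ enters only in mean square, is the crux of the argument and is precisely where the misspecified analysis departs from \cite{pwrt2021}.
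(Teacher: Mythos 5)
Your decomposition of the cross component does not match the one the lemma is actually about, and as a result your proof never touches the one term that is genuinely new here. Expanding $(\beta_0-\betahat_\lambda)^T\Sigma(\beta_0-\betahat_\lambda)$ with $\beta_0-\betahat_\lambda=(I_p-A_\lambda\Sigmahat)\beta_0-A_\lambda\fn X^T\delta-A_\lambda\fn X^T\epsilon$ (your $A_\lambda=(\Sigmahat+\lambda I_p)^{+}$) produces three pairwise cross terms; the paper's $R_c(\lambda)$ consists of the two involving $\epsilon$, namely $\frac{2}{n}\delta^TX A_\lambda\Sigma A_\lambda\fn X^T\epsilon$ and $\frac{2}{n}\epsilon^TX A_\lambda\Sigma\{\Sigmahat A_\lambda-I_p\}\beta_0$, while the $\beta_0$--$\delta$ cross term (your $R_c^{(\delta)}$) is assigned to $R_b(\lambda)$ and handled separately in Lemma 3. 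So the quantity you bound is not the paper's $R_c(\lambda)$: you treat the $\beta_0$--$\epsilon$ piece exactly as the paper does (by invoking Lemma 5.1 of \cite{pwrt2021}), but the $\delta$--$\epsilon$ piece --- the first and, for this lemma, the essential new term --- is absent from your argument, having been silently absorbed into your ``variance'' block. The repair uses only tools you already set up: Cauchy--Schwarz and the uniform spectral bounds give $\bigl|\frac{2}{n}\delta^TXA_\lambda\Sigma A_\lambda\fn X^T\epsilon\bigr|\le C\,\frac{1}{\sqrt n}\|\delta\|\,\frac{1}{\sqrt n}\|\epsilon\|\le \frac{C}{\sqrt n}\|\delta\|$ almost surely (since $\fn\|\epsilon\|^2\to\sigma^2$ a.s.), which is precisely the paper's route. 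As written, however, the proof does not establish the stated lemma.

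On your second concern: you are right that every $\delta$-term ultimately reduces to showing $\fn\|\delta\|^2\to 0$, and that Assumption 4 gives only $\expect(\delta_i^2)=o(1)$ for a triangular array, which by Markov yields convergence in probability rather than almost sure convergence; the paper's own proof asserts the a.s.\ statement at this point without further comment, so you have flagged a real subtlety rather than invented one. But your proposed fix does not go through as described. The truncation step is framed as turning the problematic term into a conditional (on $X$) linear functional of $\epsilon$ so that the $4+\eta$ moment of $\epsilon$ and Borel--Cantelli apply, yet your $R_c^{(\delta)}$ contains no $\epsilon$ at all, so there is nothing for that machinery to act on. Likewise the route through $\expect\|X^T\delta\|^2=n\,\expect[\delta_1^2\|x_1\|^2]$ needs control of the product moment $\expect[\delta_1^2\|x_1\|^2]$, which Assumption 4 does not supply (no higher moment of $\delta_1$ and no independence from $\|x_1\|$ is assumed); the paper sidesteps this by extracting $\|\frac{1}{\sqrt n}XA_\lambda\|$ in operator norm so that only the scalar quantity $\fn\|\delta\|^2$ survives. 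If you want the a.s.\ step to be airtight, the argument has to be made directly for $\fn\sum_{i=1}^n\delta_i^2$ (e.g.\ via summability of $\expect(\delta_i^2)$ over $n$ or a strengthened moment condition), not via a detour through $\epsilon$.
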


\begin{lem}
\label{lem2}
Suppose that Assumptions 1--4 hold. Then, for all $\lambda \in \Lambda$, we have
\begin{equation*}
    \frac{\gcv_c(\lambda)}{\gcv_d(\lambda)}\xrightarrow{a.s.}0.
\end{equation*}
\end{lem}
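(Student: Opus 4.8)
The plan is to mimic the proof of Lemma~\ref{lem1}, now working with the GCV residual $(I_n-S_\lambda)Y$ in place of the out-of-sample error and carrying the denominator $\gcv_d(\lambda)$ along. Since $S_\lambda$ is symmetric, $\fn\|Y-X\betahat_\lambda\|^2=\fn Y^T(I_n-S_\lambda)^2Y$, and substituting $Y=X\beta_0+\delta+\epsilon$ shows that, modulo the bookkeeping of the appendix, $\gcv_d(\lambda)=(1-\fn\tr(S_\lambda))^2$ and $\gcv_c(\lambda)=\tfrac{2}{n}(X\beta_0+\delta)^T(I_n-S_\lambda)^2\epsilon=\tfrac{2}{n}\beta_0^TX^T(I_n-S_\lambda)^2\epsilon+\tfrac{2}{n}\delta^T(I_n-S_\lambda)^2\epsilon$. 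I would then argue in three steps: (i) $\gcv_d(\lambda)$ is bounded away from zero, so it suffices to prove $\gcv_c(\lambda)\xrightarrow{a.s.}0$; (ii) the first (purely linear) cross term is dispatched by \cite{pwrt2021}; (iii) the new term $\tfrac{2}{n}\delta^T(I_n-S_\lambda)^2\epsilon$ is controlled using $\expect(\delta_i^2)=o(1)$.

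For (i), $\gcv_d(\lambda)$ depends on the data only through $X$, so for fixed $\lambda\in\Lambda$ with $\lambda\neq 0$ the almost-sure convergence of $\fn\tr(S_\lambda)$ to a deterministic limit different from $1$, and hence the lower bound on $\gcv_d(\lambda)$, is exactly what \cite{pwrt2021} establish under Assumptions~1--3 and the choice $\Lambda=[\lambda_{\min},\lambda_{\max}]$; this transfers verbatim, and the case $\lambda=0$ is covered by the explicit formula for $\gcv(0)$. The same eigenvalue control — this is the purpose of the endpoints $\lambda_{\min},\lambda_{\max}$ together with Assumption~3 — also gives $\sup_{\lambda\in\Lambda}\|(I_n-S_\lambda)^2\|_{\mathrm{op}}\le C$ almost surely for some finite $C$, which I use below. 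For (ii), $\tfrac{2}{n}\beta_0^TX^T(I_n-S_\lambda)^2\epsilon$ is literally the cross term analysed in \cite{pwrt2021}; the only point to verify is their signal-strength requirement, which holds since $\|\beta_0\|^2\le C_fC_{\max}/C_{\min}^2$ by the remark after Assumption~4, so this term tends to $0$ almost surely. For (iii), condition on $X$ (equivalently on $\delta$, a function of $X$): then $\epsilon$ has independent mean-zero coordinates, so $\expect\big[\big(\tfrac{2}{n}\delta^T(I_n-S_\lambda)^2\epsilon\big)^2\mid X\big]=\tfrac{4\sigma^2}{n^2}\|(I_n-S_\lambda)^2\delta\|^2\le\tfrac{4\sigma^2C^2}{n^2}\|\delta\|^2$, whose unconditional expectation equals $\tfrac{4\sigma^2C^2}{n}\expect(\delta_i^2)\to 0$; this already yields convergence in probability. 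Assembling (i)--(iii), once this convergence is strengthened to the almost-sure mode as explained next, gives $\gcv_c(\lambda)/\gcv_d(\lambda)\xrightarrow{a.s.}0$.

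The main obstacle is precisely that strengthening. The $L^2$ bound above controls $\|\delta\|^2$ only in mean, and $\expect(\delta_i^2)=o(1)$ says nothing about higher moments of $\delta_i$ — for the regression function in \eqref{197} one has $\expect(\delta_i^4)\to\infty$ — so the naive fourth-moment/Borel--Cantelli route is unavailable. The natural remedy is to truncate $\delta_i$ at a slowly growing level $b_n$: the truncated part has squared norm deterministically at most $nb_n^2$ and is handled by a fourth-moment Borel--Cantelli estimate with $b_n$ chosen small enough for summability, while the tail part is controlled through $\expect[\delta_i^2\,1(|\delta_i|>b_n)]\le\expect(\delta_i^2)\to 0$; making this argument go through under the single hypothesis $\expect(\delta_i^2)=o(1)$ rather than a summable rate is the delicate point. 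Aside from it, the proof is a verbatim or cosmetic adaptation of \cite{pwrt2021}, since $\gcv_d$ and the $\delta$-free part of $\gcv_c$ are unchanged from the correctly specified case.
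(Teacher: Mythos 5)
Your decomposition of $\gcv_c(\lambda)/\gcv_d(\lambda)$ into the purely linear cross term plus the new term $\tfrac{2}{n}\delta^T(I_n-S_\lambda)^2\epsilon/\gcv_d(\lambda)$, and your disposal of the linear term by citing \cite{pwrt2021}, match the paper exactly (the paper writes both terms with $(I_n-S_\lambda)^2=\lambda^2(\fn XX^T+\lambda I_n)^{+2}$ cancelled against the $\lambda^2$ in $\gcv_d(\lambda)$, which also covers $\lambda=0$ without a separate case). The genuine gap is in your step (iii): the conditional second-moment computation gives only $\expect[(\tfrac{2}{n}\delta^TA\epsilon)^2]=O(n^{-1}\expect(\delta_i^2))\to 0$, i.e.\ convergence in probability, and you explicitly defer the almost-sure strengthening to a truncation-plus-Borel--Cantelli argument that you do not carry out and that, as you yourself note, is delicate under the bare hypothesis $\expect(\delta_i^2)=o(1)$ (no summable rate, no higher moments of $\delta_i$). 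As written, the proposal does not prove the a.s.\ statement of the lemma.

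The paper avoids this entirely by not exploiting the mean-zero structure of $\epsilon$ against $\delta$ at all: it applies Cauchy--Schwarz deterministically to get $|(ii)|\le C\cdot\bigl(\tfrac{1}{\sqrt{n}}\|\epsilon\|\bigr)\cdot\bigl(\tfrac{1}{\sqrt{n}}\|\delta\|\bigr)$, where the operator-norm constant comes from the same eigenvalue bounds on $(\fn XX^T+\lambda I_n)^{+}$ you invoke, $\tfrac{1}{\sqrt{n}}\|\epsilon\|$ is a.s.\ bounded by the strong law, and the whole term is then reduced to the single fact $\fn\|\delta\|^2\to 0$ --- the same quantity already used in Lemmas \ref{lem1} and \ref{lem3}, treated as a consequence of Assumption 4. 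So no new probabilistic argument for the bilinear form is needed. Your underlying worry --- whether $\fn\sum_i\delta_i^2\xrightarrow{a.s.}0$ really follows from $\expect(\delta_i^2)=o(1)$ for a triangular array whose rows change with $p$ --- is a legitimate question about the paper's own argument as well, but if you grant that fact (as the paper does), the Cauchy--Schwarz route closes the proof immediately, whereas your conditional-variance route still leaves the a.s.\ upgrade open. Replace step (iii) with the deterministic bound $|\delta^TA\epsilon|\le\|A\|\,\|\delta\|\,\|\epsilon\|$ and the proof aligns with the paper's.
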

Note that these cross components are more complex than those in \cite{pwrt2021} owing to the misspecification of the regression function. \par 
The following lemma states that the bias components of GCV and the prediction risk are asymptotically equivalent.   
\begin{lem}
\label{lem3}
Suppose that Assumptions 1--4 hold. Then, for all $\lambda \in \Lambda$, we have
\begin{equation*}
    R_b(\lambda)-\frac{\gcv_b(\lambda)}{\gcv_d(\lambda)}\xrightarrow{a.s.}0.
\end{equation*}
\end{lem}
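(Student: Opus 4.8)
The plan is to reduce the claim to the correctly-specified case treated in \cite{pwrt2021} and to absorb every term produced by the specification error $\delta$ into quantities that vanish because $\expect(\delta_i^2)=o(1)$. Using $Y=X\beta_0+\delta+\epsilon$ together with the pseudoinverse definition \eqref{118} of $\betahat_\lambda$, the part of $\beta_0-\betahat_\lambda$ that does not involve $\epsilon$ is
\[
\xi_\lambda:=w_\lambda-d_\lambda,\qquad w_\lambda:=\bigl(I_p-(\Sigmahat+\lambda I_p)^{+}\Sigmahat\bigr)\beta_0,\qquad d_\lambda:=(\Sigmahat+\lambda I_p)^{+}\fn X^T\delta,
\]
where $w_\lambda$ is exactly the quantity that governs the bias in the well-specified problem with coefficient $\beta_0$ and $d_\lambda$ is the new misspecification piece. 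Because $\beta_0$ is the best linear predictor of $f(x_i)$ we have $\expect(x_i\delta_i)=0$, so the cross term between $\delta_0$ and $\beta_0-\betahat_\lambda$ drops out of the risk; I therefore expect $R_b(\lambda)$ to be the $\epsilon$-free part of $(\beta_0-\betahat_\lambda)^T\Sigma(\beta_0-\betahat_\lambda)$, namely $\xi_\lambda^T\Sigma\,\xi_\lambda$, and, using the identity $(I_n-S_\lambda)(X\beta_0+\delta)=X\xi_\lambda+\delta$, I expect $\gcv_b(\lambda)=\fn\|X\xi_\lambda+\delta\|^2=\xi_\lambda^T\Sigmahat\,\xi_\lambda+\frac{2}{n}\delta^TX\xi_\lambda+\fn\|\delta\|^2$.

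First I would substitute $\xi_\lambda=w_\lambda-d_\lambda$ into these two expressions and regroup, writing
\[
R_b(\lambda)-\frac{\gcv_b(\lambda)}{\gcv_d(\lambda)}=\Bigl(w_\lambda^T\Sigma\,w_\lambda-\frac{w_\lambda^T\Sigmahat\,w_\lambda}{\gcv_d(\lambda)}\Bigr)+(\text{terms containing }d_\lambda\text{ or }\delta),
\]
with $\gcv_d(\lambda)=(1-\fn\tr(S_\lambda))^2$. The first bracket is precisely the bias-component difference that \cite{pwrt2021} control under correct specification with nonrandom coefficient $\beta_0$; since $\|\beta_0\|^2\leq C_fC_{\max}/C_{\min}^2$ under Assumptions 2--4, their coefficient hypothesis is met and this bracket converges to $0$ almost surely by their argument. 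It then remains to show that each of $w_\lambda^T\Sigma\,d_\lambda$, $d_\lambda^T\Sigma\,d_\lambda$, $w_\lambda^T\Sigmahat\,d_\lambda$, $d_\lambda^T\Sigmahat\,d_\lambda$, $\fn\delta^TX\xi_\lambda$ and $\fn\|\delta\|^2$ tends to $0$ almost surely (dividing by $\gcv_d(\lambda)$ is harmless once it is bounded away from zero).

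For the remainder I would work on the almost-sure event on which $\lambda_{\max}(\Sigmahat)$ is bounded above and the smallest positive eigenvalue of $\Sigmahat+\lambda I_p$ is bounded below for every $\lambda\in\Lambda$ --- which is exactly what the endpoints $\lambda_{\min}=-\kappa^2C_{\min}(1-\sqrt{\gamma})^2$, $\lambda_{\max}$ of $\Lambda$ together with $\gamma\neq1$ are designed to deliver. On that event $\|w_\lambda\|$ and $\|\xi_\lambda\|$ are bounded, and since $\fn X^T\delta$ lies in the range of $\Sigmahat$, on which $(\Sigmahat+\lambda I_p)^{+}$ has operator norm controlled by the above eigenvalue bound, one gets $\|d_\lambda\|\leq C\|\fn X^T\delta\|\leq C(\fn\|\delta\|^2)^{1/2}$. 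The operator norms of $\Sigma$ and $\Sigmahat$ are bounded and $\gcv_d(\lambda)$ is bounded away from zero for fixed $\lambda$ (at $\lambda=0$ this must be read off the limiting formula defining $\gcv(0)$, whose effective denominator is $(\fn\tr(\Sigmahat^{+}))^2$). Cauchy--Schwarz then bounds each of the terms above by a constant times $\fn\|\delta\|^2$ or $(\fn\|\delta\|^2)^{1/2}$, and $\fn\|\delta\|^2\to0$ by Assumption 4, since $\expect(\fn\|\delta\|^2)=\expect(\delta_i^2)=o(1)$. Combined with the first bracket this yields the lemma.

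I expect the reduction of the first bracket to \cite{pwrt2021} to be essentially bookkeeping once the bound on $\|\beta_0\|$ is noted, so the genuine obstacles lie in two places. One is the uniform eigenvalue control, especially near the negative endpoint of $\Lambda$ and at $\lambda=0$, where the pseudoinverse and the limiting definition of $\gcv(0)$ make the algebra delicate and where $\gamma\neq1$ is indispensable. The other, which has no counterpart in \cite{pwrt2021} where $\delta\equiv0$, is obtaining $\fn\|\delta\|^2\to0$ from no more than a vanishing second moment of $\delta_i$: the fourth moment of $\delta_i$ need not be bounded, as example \eqref{197} shows, so a direct strong law is unavailable and a truncation-type argument is needed.
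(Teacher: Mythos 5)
Your proposal follows essentially the same route as the paper: the paper likewise splits $R_b(\lambda)-\gcv_b(\lambda)/\gcv_d(\lambda)$ into the pure-$\beta_0$ bracket (disposed of by citing Lemma 5.3 of \cite{pwrt2021}, exactly as you do), a $\delta$-quadratic part, and $\beta_0$--$\delta$ cross terms, and bounds the latter two by constants times $\fn\|\delta\|^2$ or $n^{-1/2}\|\delta\|$ using the same Bai--Yin eigenvalue control on $\Lambda$; your $\xi_\lambda$ bookkeeping is just a tidier packaging of the identical terms. Two small remarks: the paper's $R_b(\lambda)$ also carries the deterministic term $\expect(\delta_0^2)$, which you dropped but which is $o(1)$ by Assumption 4, so nothing is lost; and the final step you flag as a genuine obstacle --- deducing $\fn\|\delta\|^2\xrightarrow{a.s.}0$ from $\expect(\delta_i^2)=o(1)$ alone for a triangular array without higher moments or a rate --- is in fact simply asserted in the paper (each bound of the form $C\fn\|\delta\|^2$ is declared to converge almost surely with no further argument), so your caution there is warranted rather than a defect relative to the published proof.
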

A significant difference between our study and \cite{pwrt2021} arises in the process of showing the asymptotic equivalence of the bias components. Because the regression function is correctly specified as a linear function in \cite{pwrt2021}, their bias components are relatively simple. On the other hand, because the regression function is incorrectly specified as a linear function in this study, the bias components are more complex owing to the existence of the specification error. However, Assumption 4 makes these components tractable. \par 

The variance components of GCV and the prediction risk are the same as those in \cite{pwrt2021}. They showed that these variance components are asymptotically equivalent regardless of the model specification, which is stated in the following lemma. 
\begin{lem}
\label{lem4}
\textup{(Lemma 5.4 in \citealt{pwrt2021}.)} Suppose that Assumptions 1--3 hold. Then, for all $\lambda \in \Lambda$, we have
\begin{equation*}
    R_v(\lambda)-\frac{\gcv_v(\lambda)}{\gcv_d(\lambda)}\xrightarrow{a.s.}0.
\end{equation*}
\end{lem}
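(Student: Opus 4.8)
As the text already notes, the variance components of GCV and of the prediction risk are \emph{literally the same} as those in \cite{pwrt2021}: unlike the bias and cross components, they do not involve the regression function $f$, the specification error $\delta$, or the coefficient $\beta_0$ at all. Concretely, writing $V_\lambda=(\Sigmahat+\lambda I_p)^{+}\fn X^T\epsilon$ for the noise part of $\betahat_\lambda$, the two quantities compared in the lemma are
\begin{equation*}
R_v(\lambda)=\sigma^2+V_\lambda^T\Sigma V_\lambda=\sigma^2+\epsilon^T\Big(\fn X(\Sigmahat+\lambda I_p)^{+}\Sigma(\Sigmahat+\lambda I_p)^{+}\fn X^T\Big)\epsilon,\qquad \frac{\gcv_v(\lambda)}{\gcv_d(\lambda)}=\frac{\fn\|(I_n-S_\lambda)\epsilon\|^2}{\big(1-\fn\tr(S_\lambda)\big)^{2}},
\end{equation*}
both of which are functions of $(X,\epsilon)$ only. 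Since the only hypotheses constraining $(X,\epsilon,\Sigma)$ --- Assumptions~1--3 --- coincide verbatim with Assumptions~1--3 of \cite{pwrt2021}, the whole content of the proof is to check that our appendix decomposition isolates the same $\epsilon$-quadratic terms as theirs, and then to quote their Lemma~5.4 word for word.

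For completeness I would recall the structure of that argument. Because $\epsilon$ is independent of $X$, has mean zero, variance $\sigma^2$, and a finite $4+\eta$ moment, the trace lemma (almost-sure concentration of quadratic forms) gives $\epsilon^T M_n\epsilon=\sigma^2\tr(M_n)+o(1)$ almost surely and uniformly over $\lambda\in\Lambda$, for each of the $n\times n$ matrices appearing above, as long as their operator norms remain bounded. That uniform bound is exactly what the endpoints of $\Lambda=[\lambda_{\min},\lambda_{\max}]$ deliver, as noted after the definition of $\Lambda$: the minimum positive eigenvalue of $\Sigmahat+\lambda I_p$ is bounded away from $0$ and its maximum eigenvalue is bounded above, uniformly over $\lambda\in\Lambda$, with probability one for all large $n$ (the spectrum of $\Sigmahat$ concentrating near the Marchenko--Pastur support), so the ridge resolvent acts boundedly on the range of $X^T$. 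The claim then reduces to the deterministic spectral identity
\begin{equation*}
\sigma^2\Big(1+\fn\tr\!\big((\Sigmahat+\lambda I_p)^{+}\Sigma(\Sigmahat+\lambda I_p)^{+}\Sigmahat\big)\Big)-\frac{\sigma^2\,\fn\tr\!\big((I_n-S_\lambda)^{2}\big)}{\big(1-\fn\tr(S_\lambda)\big)^{2}}\xrightarrow{a.s.}0,
\end{equation*}
which is handled by the deterministic-equivalent calculus for resolvent traces such as $\fn\tr\big(\Sigma(\Sigmahat+\lambda I_p)^{-1}\big)$ used in \cite{hmrt2022} and \cite{pwrt2021}: both sides converge to the same function of $\gamma$, $\lambda$, and the limiting spectral distribution of $\Sigma$.

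The main obstacle --- the reason this is not elementary --- is the case $\lambda\le 0$. For $\lambda\in[\lambda_{\min},0)$ the matrix $\Sigmahat+\lambda I_p$ has a small (though bounded-away-from-zero) positive eigenvalue, so the deterministic equivalents must be applied close to the edge of the bulk, and at $\lambda=0$ one must argue through the limiting definition $\gcv(0)$ and the Moore--Penrose pseudoinverse. This is precisely the refinement of \cite{hmrt2022} carried out in \cite{pwrt2021}, and --- crucially for us --- it concerns only $X$ and $\epsilon$, so it is untouched by the misspecification of $f$. Hence nothing beyond matching notation is required, and Lemma~\ref{lem4} follows.
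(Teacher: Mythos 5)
Your proposal is correct and takes the same route as the paper, which offers no separate proof of Lemma~\ref{lem4}: since $R_v(\lambda)$, $\gcv_v(\lambda)$, and $\gcv_d(\lambda)$ depend only on $(X,\epsilon,\Sigma)$ and Assumptions 1--3 coincide with those of \cite{pwrt2021}, the statement is exactly their Lemma~5.4 and is simply quoted. Your additional sketch of the internal concentration-plus-deterministic-equivalents argument is a reasonable summary of what that cited proof does, though it is not needed here (and the uniformity over $\Lambda$ you mention is handled separately, via equicontinuity, in the proof of Theorem~\ref{thm1}).
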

By employing Lemmas 1--4, we obtain the following theorem, which states that GCV is uniformly consistent under the misspecification.
\begin{thm}
\label{thm1}
Suppose that Assumptions 1--4 hold. Then, we have
\begin{equation*}
    \sup_{\lambda \in \Lambda}|\gcv(\lambda)-R(\lambda)|\xrightarrow{a.s.}0. 
\end{equation*}
  
\end{thm}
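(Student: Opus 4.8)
The plan is to upgrade the pointwise almost-sure convergence furnished by Lemmas~\ref{lem1}--\ref{lem4} to uniform convergence over the compact interval $\Lambda$, by establishing an equicontinuity (uniform Lipschitz) bound for $\gcv(\cdot)$ and $R(\cdot)$ and then running a standard finite-net argument, as in \cite{pwrt2021}. First, for each fixed $\lambda\in\Lambda$ I would write
\begin{equation*}
\gcv(\lambda)-R(\lambda)=\left(\frac{\gcv_b(\lambda)}{\gcv_d(\lambda)}-R_b(\lambda)\right)+\left(\frac{\gcv_v(\lambda)}{\gcv_d(\lambda)}-R_v(\lambda)\right)+\frac{\gcv_c(\lambda)}{\gcv_d(\lambda)}-R_c(\lambda),
\end{equation*}
so that Lemmas~\ref{lem3}, \ref{lem4}, \ref{lem2}, and~\ref{lem1} together give $\gcv(\lambda)-R(\lambda)\xrightarrow{a.s.}0$ for every $\lambda$. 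Fixing a countable dense set $\Lambda_0\subset\Lambda$ and intersecting the countably many probability-one events, there is an event of probability one on which $\gcv(\lambda)-R(\lambda)\to0$ for all $\lambda\in\Lambda_0$ simultaneously.

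Next I would show that there is a deterministic constant $L<\infty$ such that, almost surely, both $R(\cdot)$ and $\gcv(\cdot)$ are $L$-Lipschitz on $\Lambda$ for all large $n$. Differentiating in $\lambda$ via $\partial_\lambda\betahat_\lambda=-(\Sigmahat+\lambda I_p)^{+2}\fn X^TY$ and $\partial_\lambda\tr(S_\lambda)=-\tr\bigl(X(\Sigmahat+\lambda I_p)^{+2}X^T/n\bigr)$, this reduces to a.s.-eventual bounds, uniform over $\lambda\in\Lambda$, on $\|\fn X^TY\|$, $\fn\|Y\|^2$, $\|(\Sigmahat+\lambda I_p)^{+}\|$, $\|S_\lambda\|$, and $1/|1-\fn\tr(S_\lambda)|$. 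The eigenvalue bounds follow from standard edge estimates for sample covariance matrices together with Assumption~3 and the choice $\lambda_{\min}=-\kappa^2 C_{\min}(1-\sqrt\gamma)^2$, $0<\kappa<1$, which keeps the nonzero eigenvalues of $\Sigmahat+\lambda I_p$ in a fixed compact subinterval of $(0,\infty)$ uniformly over $\Lambda$, exactly as in \cite{pwrt2021}; the bounds on $\|\fn X^TY\|$ and $\fn\|Y\|^2$ follow from Assumptions~1--2, $\expect[f(x_i)^2]\le C_f$, and $\expect(\delta_i^2)=o(1)$. The only feature specific to the misspecified setting is the role of the specification error: by the first-order condition defining $\beta_0$ in Assumption~4 we have $\expect[x_i\delta_i]=0$, whence—since the test point is independent of the sample and $\epsilon_0$ is mean-zero and independent of $x_0$—
\begin{equation*}
R(\lambda)=\sigma^2+\expect[\delta_0^2]+(\beta_0-\betahat_\lambda)^T\Sigma(\beta_0-\betahat_\lambda),
\end{equation*}
so $\delta$ enters $R(\lambda)$ only through the $\lambda$-free constant $\expect[\delta_0^2]=o(1)$, and its $\lambda$-derivative $2(\beta_0-\betahat_\lambda)^T\Sigma(\Sigmahat+\lambda I_p)^{+2}\fn X^TY$ is controlled using the above together with $\|\beta_0\|^2\le C_fC_{\max}/C_{\min}^2$. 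Likewise $\delta$ enters $\gcv(\lambda)$ only through the $\delta$-dependent terms already estimated for Lemmas~\ref{lem2} and~\ref{lem3}, which Assumption~4 keeps bounded and Lipschitz; hence $L$ can be taken deterministic.

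Finally, on the probability-one event above, and for $n$ large enough that the bound of the previous step holds, I would fix $\varepsilon>0$, partition $\Lambda$ into $N=\lceil 3L(\lambda_{\max}-\lambda_{\min})/\varepsilon\rceil$ equal subintervals, and pick one point $\lambda_j\in\Lambda_0$ in each; then for any $\lambda\in\Lambda$, with $\lambda_j$ the selected point in the same subinterval,
\begin{equation*}
|\gcv(\lambda)-R(\lambda)|\le|\gcv(\lambda)-\gcv(\lambda_j)|+|\gcv(\lambda_j)-R(\lambda_j)|+|R(\lambda_j)-R(\lambda)|\le\frac{2\varepsilon}{3}+|\gcv(\lambda_j)-R(\lambda_j)|.
\end{equation*}
As there are only $N$ points $\lambda_j$ and each satisfies $|\gcv(\lambda_j)-R(\lambda_j)|\to0$, for all large $n$ the last term is below $\varepsilon/3$ for every $j$ simultaneously, giving $\sup_{\lambda\in\Lambda}|\gcv(\lambda)-R(\lambda)|\le\varepsilon$, and letting $\varepsilon\downarrow0$ yields the theorem. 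The step I expect to be the main obstacle is the uniform Lipschitz bound: controlling $1/|1-\fn\tr(S_\lambda)|$ and the smallest nonzero eigenvalue of $\Sigmahat+\lambda I_p$ over the \emph{entire} interval $\Lambda$—including negative $\lambda$, where these quantities are most delicate—is precisely what the choice of $\lambda_{\min}$ and the random-matrix edge estimates are designed to handle, while checking that the new misspecification terms do not enlarge the Lipschitz constant reduces to Assumption~4 and the estimates already carried out for Lemmas~\ref{lem1}--\ref{lem3}.
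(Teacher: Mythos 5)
Your proposal is correct and follows essentially the same route as the paper: pointwise almost-sure convergence from Lemmas~\ref{lem1}--\ref{lem4}, then uniform convergence over the compact set $\Lambda$ via almost-sure bounds on $\gcv(\lambda)$, $R(\lambda)$, and their $\lambda$-derivatives (using the same eigenvalue estimates that keep the nonzero spectrum of $\Sigmahat+\lambda I_p$ in a fixed compact subinterval of $(0,\infty)$). The only cosmetic difference is that you unroll the compactness step as an explicit $\varepsilon/3$ finite-net argument where the paper invokes the Arzela--Ascoli theorem; these are the same argument.
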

Lemmas 1--4 imply the pointwise convergence of GCV. However, the pointwise convergence is not sufficient for the asymptotic optimality of GCV because $\lambdahat$ and $\lambda^*$ are random variables and depend on $n$, and we require the uniform convergence. To obtain Theorem 1, we show that the difference between GCV and the prediction risk is equicontinuous on $\Lambda$, which makes the Arzela-Ascoli theorem hold. As in \cite{pwrt2021}, we derive the equicontinuity by showing that the prediction risk, GCV, and their derivatives are bounded on $\Lambda$. \par 

Theorem 1 implies that GCV tuning is asymptotically optimal tuning. This is because $R(\lambdahat)\geq R(\lambda^*)$ by the definition of $\lambda^*$, and we have 
\begin{align*}
0&\leq R(\lambdahat)-R(\lambda^*)=R(\lambdahat)-\gcv(\lambdahat)+\gcv(\lambdahat)-\gcv(\lambda^*)-(R(\lambda^*) -\gcv(\lambda^*))\\
&\leq R(\lambdahat)-\gcv(\lambdahat)-(R(\lambda^*)-\gcv(\lambda^*)) \leq 2 \sup_{\lambda \in \Lambda}|R(\lambda)-\gcv(\lambda)| \xrightarrow{a.s}0.
\end{align*}
\par 

\begin{rem}
\textup{
Although in this study we investigated GCV only for the high-dimensional misspecified linear model, the uniform consistency also holds for LOOCV under the same assumption if $\expect[f(x_i)]=0$. This is because \cite{pwrt2021} showed that GCV and LOOCV are asymptotically equivalent regardless of the model specification for mean zero response variables with the bounded second moment. See Theorem 4.2 of \cite{pwrt2021}. } 
\end{rem}

\section{Simulation Studies}
In this section, we investigate the prediction performance of ridge regressions that are tuned by GCV and LOOCV. For reference, we also compare it with the prediction performance of ridge regression with optimal tuning, the min-norm least squares, and the Lasso. To implement the Lasso, we use the R package \verb|glmnet| (\citealt{fht2010}). We choose the tuning parameter for the Lasso using 10-fold cross-validation with the \verb|lambda.min| option. The candidates of the tuning parameter for the Lasso are also generated by \verb|glmnet|. The set of candidates for the tuning parameter for ridge regression is given by $\Lambda$, including 100 breakpoints of the interval $[\lambda_{\min},\lambda_{\max}]$ and zero, where $\lambda_{\min}=-(1-\sqrt{\gamma})^2/5$ and $\lambda_{\max}=5$. In all settings, we set the covariance of the covariates as $\Sigma=I_p+\rho\boldsymbol{1}\boldsymbol{1}^T$, where $\rho=0.1$ and $\boldsymbol{1}=(1,\ldots,1)^T \in \real^{p}$, and this implies $\lambda_{\min}(\Sigma)=1$. We also set $n=100$ and $p=\gamma n$, where $\gamma$ is given by
\begin{equation*}
    \gamma \in \{0.1,0.2,0.5,0.9,1.1,2,5,10,20,30,40\}.
\end{equation*}
\par 
We consider the following data generating process: 
\begin{equation*}
    y_i=f(x_i)+\epsilon_i,
\end{equation*}
where $x_i \sim N(0, \Sigma)$ and $\epsilon_i|x_i \sim  N(0,1)$. $f(x_i)$ takes one of the following forms: 
\begin{align}
    f(x_i)&=x_i^T\beta, \label{263} \\ 
    f(x_i)&=x_i^T\beta+\frac{1}{\sqrt{(1+\rho)p \log n}}\|x_i\|, \label{264}\\ 
    f(x_i)&=x_i^T\beta+\frac{1}{\log p}\max_{1\leq j \leq p}|x_{ij}|, \label{265}
\end{align}
where $\beta \in \mathbb{R}^p$ is given by $\beta=(b,\ldots,b)$ with $b=\sqrt{\alpha/(p+p^2\rho)}$, and $\alpha=10$. If $f(x_i)$ is defined as in \eqref{263}, the data generating process is the same as in \cite{kls2020}. If $f(x_i)$ is defined as in \eqref{264}, simple calculations yield
\begin{equation*}
    \expect(\delta_i^2)\leq \frac{1}{(1+\rho)p\log n}\expect(\|x_i\|^2)=\frac{1}{\log n}.
\end{equation*}
If $f(x_i)$ is defined as in \eqref{265}, then we have 
\begin{equation*}
    \expect(\delta_i^2)\leq\frac{1}{(\log p)^2}\expect\left(\max_{1 \leq j\leq p}|x_{ij}|^2\right)\leq \frac{4(1+\rho)\log(\sqrt{2}p)}{(\log p)^2}=O\left(\frac{1}{\log p}\right). 
\end{equation*}
These specification errors satisfy Assumption 4. 
\par  

To compare the prediction accuracy of the four methods, we calculate the averaged empirical prediction risks. Let $\betahat$ be an estimator of the linear projection coefficient $\beta_0$. The empirical prediction risk of $\betahat$ at the $k$th repetition is defined as
\begin{equation*}
    \text{EPR}(\betahat^{(k)})=\frac{1}{L}\sum_{\ell=1}^L\left(y_{0\ell}^{(k)}-x_{0\ell}^{(k)T}\betahat^{(k)}\right)^2,
\end{equation*}
where $L$ is the size of the test data, which we set as $L=1000$, $(y_{0\ell}^{(k)},x_{0\ell}^{(k)})$ are the test data at the $k$th repetition for $\ell=1,\ldots, L$, and $\betahat^{(k)}$ is the realization of $\betahat$ calculated by the training data at the $k$th repetition. Note that the optimal tuning parameter for ridge regression at the $k$th repetition is the value in $\Lambda$ that minimizes the empirical prediction risk at the $k$th repetition. Then, the averaged empirical prediction risk of $\betahat$ is given by
\begin{equation*}
    \text{AEPR}(\betahat)=\frac{1}{K}\sum_{k=1}^K\text{EPR}(\betahat^{(k)}),
\end{equation*}
where $K$ is the repetition number of experiments, and we set $K=100$. 

Next, we present the simulation results. Table \ref{table1} shows the averaged empirical prediction risks of ridge regressions, the min-norm least squares, and the Lasso when $f(x_i)$ is given by \eqref{263}. When $\gamma \leq 0.2$, the empirical prediction risks of these four methods are of similar values and they increase as $\gamma$ approaches 1. In particular, the empirical prediction risk of the min-norm least squares becomes inflated owing to overfitting. However, when $\gamma$ exceeds 1, the empirical prediction risks of ridge regressions and the min-norm least squares begin to decrease, and these trends continue as $\gamma$ increases. They indicate that the large number of covariates enhances the prediction performance of ridge regression and the min-norm least squares. However, the empirical prediction risk of the Lasso continues to increase when $\gamma$ exceeds 1, indicating that including many covariates does not necessarily improve the prediction performance of the Lasso. The prediction performance of ridge regressions tuned by GCV and LOOCV is generally close to that of optimally tuned ridge regression. When $\gamma<10$, ridge regressions tuned by GCV and LOOCV considerably outperform the min-norm least squares, but the performance of the three methods is similar when $\gamma \geq 10$. The prediction performance of the Lasso is worse than that of ridge regressions tuned by GCV and LOOCV for all $\gamma$.  \par

Table \ref{table2} shows the averages of selected tuning parameters for ridge regressions when $f(x_i)$ is given by \eqref{263}. GCV and LOOCV select similar values on average, and they are close to the averaged optimal tuning parameters in general. The values selected by GCV and LOOCV are increasing when $\gamma \leq 5$ but begin to decrease when $\gamma >5$, and they become negative when $\gamma\geq 20$.  \par

Table \ref{table3} presents the averaged empirical prediction risks of ridge regressions, the min-norm least squares, and the Lasso when $f(x_i)$ is given by \eqref{264}. Owing to the misspecification of the regression function, the empirical prediction risks of all methods become larger for all $\gamma$. However, their behaviors are similar to the case of the correct specification. The empirical prediction risks of ridge regressions are increasing when $\gamma \leq 1$, and they are decreasing when $\gamma>1$. The performances of GCV and LOOCV are also similar to that of optimal tuning under this misspecification. On the other hand, the empirical prediction risk of the Lasso is increasing, and it is larger than that of ridge regressions tuned by GCV and LOOCV for all $\gamma$. \par 

Table \ref{table4} presents the averages of the selected tuning parameters for ridge regressions when $f(x_i)$ is given by \eqref{264}. GCV and LOOCV also selected values close to the optimal tuning parameters under this misspecification for all $\gamma$. The selected tuning parameters for ridge regressions become somewhat larger than the correct specification. They are increasing when $\gamma \leq 5$ and decreasing when $\gamma \geq 10$. 

\par

Table \ref{table5} shows the averaged empirical prediction risks of ridge regressions, the min-norm least squares, and the Lasso when $f(x_i)$ is given by \eqref{265}. All empirical prediction risks are relatively large when $p$ is small because the specification errors are large in that case. The empirical prediction risks of ridge regressions and the min-norm least squares reach their minimums at $\gamma=40$. By contrast, the empirical prediction risk of Lasso reaches the minimum when $\gamma=0.2$ and the maximum when $\gamma=40$. Ridge regressions tuned by GCV and LOOCV also exhibit similar prediction performance to optimally tuned ridge regressions under this misspecification. When $\gamma \leq 5$, ridge regressions considerably outperform the min-norm least squares, but when $\gamma>5$, their performances are similar. Ridge regressions tuned by GCV and LOOCV outperform the Lasso for all $\gamma$. \par 

Table \ref{table6} shows the averages of the selected tuning parameters for ridge regressions when $f(x_i)$ is given by \eqref{265}. The tuning parameters selected by GCV and LOOCV are also close to the optimally selected tuning parameters when $\gamma \leq 10$. When $\gamma=20$, the averaged optimal tuning parameter for ridge regression becomes positive, but the averaged selected values by GCV and LOOCV become negative. However, the prediction performance of these methods at these tuning parameters is almost the same. The optimal tuning parameters and selected values by GCV and LOOCV are negatively large when $\gamma\geq 30$. \par

Note that we also performed simulation studies where $f(x_i)$ is given by \eqref{197}. Because the results are similar to those in Tables \ref{table1} and \ref{table2}, we do not report them here.

In summary, our simulation studies show that the prediction performance of ridge regressions tuned by GCV and LOOCV is always close to that of optimally tuned ridge regression under correct and incorrect model specifications. Almost no differences exist between the performances of GCV and LOOCV. We observed that ridge regressions tuned by GCV and LOOCV outperform the min-norm least squares when $n$ is larger than $p$ or when $p$ is moderately larger than $n$, but their performance is almost the same when $p$ is much larger than $n$. We also observed that the ridge regressions tuned by GCV and LOOCV always outperform the Lasso regardless of the model specification or the ratio of $p$ and $n$ under the considered settings.

\begin{table}
\centering
\caption{Averaged empirical prediction risks when $f(x_i)$ is given by \eqref{263}, $p=\gamma n$, and $n=100$.}
\hspace*{-2.5em}
\begin{tabular}{cccccccccccc}
\midrule
$\gamma$   & 0.1   & 0.2   & 0.5   & 0.9    & 1.1    & 2     & 5     & 10    & 20    & 30 & 40\\ \midrule
Optimal ridge & 1.098 & 1.188 & 1.348 & 1.395  & 1.394  & 1.340   & 1.248 & 1.182 & 1.136  &1.116  & 1.116\\
GCV ridge     & 1.118 & 1.206 & 1.367 & 1.414  & 1.528  & 1.364  & 1.267  & 1.192 & 1.142  &1.120  &1.118 \\
LOOCV ridge      & 1.118 & 1.206 & 1.366 & 1.414  & 1.496  & 1.365  & 1.267 & 1.193 & 1.141   &1.119  & 1.118\\
Min-norm LS & 1.122 & 1.249 & 1.999 & 11.666 & 11.046 & 2.091 & 1.333 & 1.193 & 1.141   &1.124  & 1.129 \\
Lasso   & 1.121 & 1.251 & 1.789 & 2.240   & 2.275  & 2.417  & 2.472 & 2.678 & 2.879  &2.980  & 3.089\\ \midrule
\end{tabular}
\label{table1}

\vspace{2em}

\caption{Averages of selected tuning parameters for ridge regressions when $f(x_i)$ is given by \eqref{263}, $p=\gamma n$, and $n=100$.}
 \hspace*{-2.5em}
\begin{tabular}{cccccccccccc}
\midrule
$\gamma$   & 0.1   & 0.2   & 0.5   & 0.9   & 1.1   & 2     & 5     & 10    & 20    & 30    & 40\\ \midrule
Optimal ridge & 0.048 & 0.137 & 0.471  & 0.836 & 0.948 & 1.436 & 1.896 & 1.503 & -0.770  & -2.531& -4.984 \\
GCV ridge     & 0.038 & 0.129 & 0.488 & 0.842 & 0.920 & 1.376  & 1.708 & 1.327 & -0.721  &-2.370 & -4.527\\
LOOCV ridge      & 0.035 & 0.130 & 0.476 & 0.829 & 0.905 & 1.327 & 1.644  & 1.144 & -0.885  &-2.542 & -4.712\\
 \midrule
\end{tabular}
\label{table2}

\vspace{2em}

\caption{Averaged empirical prediction risks when $f(x_i)$ is given by \eqref{264}, $p=\gamma n$, and $n=100$.}
 \hspace*{-2.5em}
\begin{tabular}{cccccccccccc}
\midrule
$\gamma$    & 0.1   & 0.2   & 0.5   & 0.9    & 1.1    & 2     & 5     & 10    & 20  & 30  & 40\\ \midrule
Optimal ridge  & 1.347 & 1.434 & 1.612 & 1.646  & 1.645  & 1.592 & 1.490  & 1.424 & 1.365 & 1.347 &1.342 \\
GCV ridge      & 1.367 & 1.455 & 1.640  & 1.668  & 1.851  & 1.641  & 1.516 & 1.438 & 1.373 & 1.352 & 1.345 \\
LOOCV ridge       & 1.369 & 1.458 & 1.638 & 1.672  & 1.725  & 1.641 & 1.514 & 1.438  & 1.372  & 1.352 & 1.345\\
Min-norm LS & 1.377 & 1.518 & 2.472 & 14.488 & 13.847 & 2.523 & 1.608 & 1.443 & 1.370   & 1.354 &1.353\\
Lasso    & 1.376 & 1.519 & 2.152 & 2.542  & 2.608  & 2.714 & 2.670  & 2.850  & 3.051 & 3.15  & 3.254\\ \midrule
\end{tabular}
\label{table3}

\vspace{2em}

\caption{Averages of selected tuning parameters for ridge regressions when $f(x_i)$ is given by \eqref{264}, $p=\gamma n$, and $n=100$.}
 \hspace*{-2.5em}
\begin{tabular}{cccccccccccc}
\midrule
$\gamma$   & 0.1   & 0.2   & 0.5   & 0.9   & 1.1   & 2     & 5     & 10    & 20   &30  & 40\\ \midrule
Optimal ridge & 0.063 & 0.161 & 0.537 & 0.938 & 1.050 & 1.592 & 2.231 & 2.021 & -0.065  &-1.802 &-4.507 \\
GCV ridge     & 0.051  & 0.152 & 0.536 & 0.926 & 1.029 & 1.495 & 1.940   & 1.913 & -0.157 & -1.588 &-3.774 \\
LOOCV ridge      & 0.048 & 0.153 & 0.531 & 0.910 & 1.028 & 1.448 & 1.883  & 1.758 & -0.357  & -1.818& -4.015\\
\midrule
\end{tabular}
\label{table4}
\end{table}

\begin{table}
\centering
 \caption{Averaged empirical prediction risks when $f(x_i)$ is given by \eqref{265}, $p=\gamma n$, and $n=100$.}
 \hspace*{-2.5em}
\begin{tabular}{cccccccccccc}

\midrule
$\gamma$    & 0.1   & 0.2   & 0.5   & 0.9    & 1.1    & 2     & 5     & 10    & 20   & 30  & 40\\ \midrule
Optimal ridge  & 1.974 & 1.867 & 1.901 & 1.860  & 1.839  & 1.737 & 1.579  & 1.480 & 1.396 & 1.365 &  1.351 \\
GCV ridge      & 2.001 & 1.897 & 1.935  & 1.888  & 2.005  & 1.795  & 1.605 & 1.494 & 1.403 & 1.37 & 1.354\\
LOOCV ridge       & 2.002 & 1.897 & 1.935 & 1.888  & 1.927  & 1.788 & 1.604 & 1.495  & 1.403 & 1.37& 1.354\\
Min-norm LS & 2.026 & 1.992 & 2.971 & 16.580 & 15.899 & 2.772 & 1.709 & 1.501 & 1.400   & 1.372& 1.362\\
Lasso    & 2.025 & 1.995 & 2.528 & 2.802  & 2.858  & 2.899 & 2.777  & 2.889  & 3.067 & 3.162& 3.258\\ \midrule
\end{tabular}
\label{table5}

\vspace{2em}

\caption{Averages of selected tuning parameters for ridge regressions when $f(x_i)$ is given by \eqref{265}, $p=\gamma n$, and $n=100$.}
\hspace*{-2.5em}
\begin{tabular}{cccccccccccc}
\midrule
$\gamma$   & 0.1   & 0.2   & 0.5   & 0.9   & 1.1   & 2     & 5     & 10    & 20   &30  & 40\\ \midrule
Optimal ridge & 0.087 & 0.198 & 0.591 & 1.007 & 1.126 & 1.674 & 2.340 & 2.134 & 0.021 & -1.731 & -4.508\\
GCV ridge     & 0.064  & 0.186 & 0.591 & 0.986 & 1.111 & 1.557 & 2.036   & 2.037 & -0.067 &-1.549& -3.755\\
LOOCV ridge      & 0.065 & 0.192 & 0.585 & 0.974 & 1.103 & 1.521 & 1.981  & 1.888 & -0.272 &-1.788& -3.995\\
\midrule
\end{tabular}
\label{table6}  
\end{table}


\section{Conclusion}
In this study, we demonstrated that generalized cross-validation can select the asymptotically optimal tuning parameter for ridge regression in high-dimensional misspecified linear models if the second moment of the specification error converges to zero. We permitted the candidates for the tuning parameter for ridge regression to be not only positive values but also zero and negative values. Furthermore, our simulation studies showed that ridge regression tuned by generalized cross-validation performed like optimally tuned ridge regression and outperformed the Lasso under correct and incorrect model specifications. Our results suggest that generalized cross-validation remains an effective method to tune high-dimensional ridge regression for prediction if the degree of model misspecification is small.

\section*{Acknowledgments}
The author would like to thank Naoya Sueishi for his valuable comments. This study was supported by JST SPRING (Grant Number JPMJFS2126) and the Institute for Economic and Business Research, Shiga University.

\appendix
\section*{Appendix}
We add some notations. Let $\|A\|$ be the operator norm of a matrix $A$. Let $C$ be a generic positive constant that can vary from line to line.\par 
We decompose the prediction risk $R(\lambda)$ into the bias component $R_b(\lambda)$, the variance component $R_v(\lambda)$, and the cross component $R_c(\lambda)$. Under Assumption 4, we can write
\begin{align*}
R_b(\lambda)&=\expect(\delta_0^2)+\beta_0^T\{\Sigmahat(\Sigmahat+\lambda I_p)^+-I_p\}\Sigma\{\Sigmahat(\Sigmahat+\lambda I_p)^+-I_p\}\beta_0\\
&\ \ +\frac{2}{n} \delta^TX(\Sigmahat+\lambda I_p)^+\Sigma\{\Sigmahat(\Sigmahat+\lambda I_p)^+-I_p\}\beta_0 \\
&\ \ +\fn \delta^TX(\Sigmahat +\lambda I_p)^+\Sigma (\Sigmahat +\lambda I_p)^+\fn X^T\delta,\\  
    R_v(\lambda)&=\sigma^2+\fn \epsilon^TX(\Sigmahat+\lambda I_p)^+\Sigma (\Sigmahat +\lambda I_p)^+\fn X^T\epsilon, \\
    R_c(\lambda)&=\frac{2}{n}\delta^TX(\Sigmahat+\lambda I_p)^+\Sigma(\Sigmahat+\lambda I_p)^+\fn X^T\epsilon \\
    &\ \ +\frac{2}{n}\epsilon^TX(\Sigmahat+\lambda I_p)^+\Sigma \{\Sigmahat(\Sigmahat+\lambda I_p)^+-I_p\}\beta_0,
\end{align*}
where $\delta_0=f(x_0)-x_0^T\beta_0$. Simple calculations yield 
\begin{equation*}
R(\lambda)=R_b(\lambda)+R_v(\lambda)+R_c(\lambda).  
\end{equation*}

Similarly, we decompose the numerator of $\gcv(\lambda)$ into the bias component $\gcv_b(\lambda)$, the variance component $\gcv_v(\lambda)$, and the cross component $\gcv_c(\lambda)$: 
\begin{align*}
    \gcv_b(\lambda)&=\fn \beta_0^TX^T(I_n-S_\lambda )^2X\beta_0+\frac{2}{n}\beta_0^TX^T(I_n-S_\lambda)^2\delta+\fn \delta^T(I_n-S_\lambda)^2\delta,\\
    \gcv_v(\lambda)&=\fn \epsilon^T(I_n-S_\lambda)^2\epsilon,  \\
    \gcv_c(\lambda)&=\frac{2}{n}\beta_0^TX^T(I_n-S_\lambda)^2\epsilon+\frac{2}{n}\delta^T(I_n-S_\lambda)^2 \epsilon.
\end{align*}
The denominator of $\gcv_d(\lambda)$ can be written as
\begin{equation*}
     \gcv_d(\lambda)=\left(1-\tr(S_\lambda)\fn\right)^2=\left(1-\tr[(\Sigmahat +\lambda I_p)^+\Sigmahat]\fn\right)^2,
\end{equation*}
and simple calculations yield
\begin{equation*}
    \gcv(\lambda)=\frac{\gcv_b(\lambda)}{\gcv_d(\lambda)}+\frac{\gcv_v(\lambda)}{\gcv_d(\lambda)}+\frac{\gcv_c(\lambda)}{\gcv_d(\lambda)}.
\end{equation*}
Recall that we are defining $\gcv (0):=\lim_{\lambda \rightarrow 0}\gcv(\lambda)$.

\begin{proof}[Proof of Lemma \ref{lem1}]
We begin by evaluating the first term of $R_c(\lambda)$. The Cauchy--Schwarz inequality yields
\begin{align*}
\frac{2}{n}\left|\delta^TX(\Sigmahat +\lambda I_p)^+\Sigma (\Sigmahat +\lambda I_p)^+\fn X^T\epsilon \right|
    &\leq \frac{2}{n^2}\|\delta\|\|X(\Sigmahat +\lambda I_p)^+\Sigma (\Sigmahat +\lambda I_p)^+X^T\epsilon \|\\
    &\leq \frac{2}{n^2} \lambda_{\max}(\Sigma)\|\delta\| \|X(\Sigmahat+\lambda I_p)^+\|^2  \|\epsilon\|=(i), 
\end{align*}
for all $\lambda \in \Lambda$. Note that $\lambda_{\min}(\fn XX^T)\geq \lambda_{\min}(\Sigma)\lambda_{\min}(\fn ZZ^T) \geq  C_{\min} \lambda_{\min}(\fn ZZ^T)$, where $Z\in \mathbb{R}^{n \times p}$ is the matrix whose $i$th row is $z_i$. The Bai--Yin theorem (Theorem 1 in \citealt{by1993}) yields
\begin{equation*}
    \lambda_{\min}\left(\frac{1}{p}ZZ^T\right)\xrightarrow{a.s.}\left(1-\frac{1}{\sqrt{\gamma}}\right)^2.
\end{equation*}
Thus, we obtain 
\begin{equation*}
    \lambda_{\min}\left(\frac{1}{n}ZZ^T\right)=\frac{p}{n} \lambda_{\min}\left(\frac{1}{p}ZZ^T\right)\xrightarrow{a.s.}\gamma \left(1-\frac{1}{\sqrt{\gamma}}\right)^2=(1-\sqrt{\gamma})^2.
\end{equation*}
Therefore, for sufficiently large $n$, we almost surely obtain
\begin{align*}
    \lambda_{\min}\left(\frac{1}{n}XX^T\right)+\lambda_{\min}&> \kappa C_{\min}(1-\sqrt{\gamma})^2 -\kappa^2 C_{\min}(1-\sqrt{\gamma})^2\\
    &=\kappa(1-\kappa) C_{\min}(1-\sqrt{\gamma})^2>0.
\end{align*}
Similar arguments show that for sufficiently large $n$ and some finite positive constant $K>1$, 
\begin{equation*}
    \lambda_{\max}\left(\fn X^TX\right)=\lambda_{\max}\left(\fn XX^T\right)< K C_{\max}(1-\sqrt{\gamma})^2,
\end{equation*}
almost surely. Note that
\begin{align*}
    \left\|\frac{1}{\sqrt{n}}X(\Sigmahat+\lambda I_p)^+\right\|^2&=\lambda_{\max}\left(\left(\frac{1}{\sqrt{n}}X(\Sigmahat+\lambda I_p)^+\right)^T\left(\frac{1}{\sqrt{n}}X(\Sigmahat+\lambda I_p)^+\right)\right)\\
    &=\lambda_{\max}\left((\Sigmahat+\lambda I_p)^+\Sigmahat(\Sigmahat+\lambda I_p)^+\right),
\end{align*}
for all $\lambda \in \Lambda$. Let 
\begin{equation}
\Sigmahat=V\diag(\lambda_1,\ldots,\lambda_n,0,\ldots,0)V^T  \label{2286}
\end{equation}
be the eigen decomposition of $\Sigmahat$, where $V \in \mathbb{R}^{p\times p}$ is an orthogonal matrix, and $\lambda_1\geq \cdots\geq \lambda_n$ are the decreasing ordered eigenvalues of $\Sigmahat$. Note that $\lambda_1=\lambda_{\max}(\Sigmahat)$, and $\lambda_n=\lambda_{\min}(\fn XX^T)$. Moreover, $\Sigmahat^+$ can be expressed as $\Sigmahat^+=V\text{diag}(\lambda_1^{-1},\ldots,\lambda_{n}^{-1},0,\ldots,0)V^T.$ We can now write 
\begin{equation*}
    (\Sigmahat+\lambda I_p)^+\Sigmahat (\Sigmahat+\lambda I_p)^+=V\diag\left(\frac{\lambda_1}{(\lambda_1+\lambda)^2},\ldots, \frac{\lambda_n}{(\lambda_n+\lambda)^2},0,\ldots,0\right)V^T.
\end{equation*}
Note that for $i=1,\ldots,n$, sufficiently large $n$, and all $\lambda \in \Lambda$, we have
\begin{equation*}
    \frac{\lambda_i}{(\lambda_i+\lambda)^2}\leq \frac{\lambda_1}{(\lambda_n+\lambda_{\min})^2}< \frac{KC_{\max}}{\kappa^2(1-\kappa)^2 C_{\min}^2(1-\sqrt{\gamma})^2},
\end{equation*}
almost surely. Therefore, for sufficiently large $n$ and all $\lambda \in \Lambda$, we almost surely obtain
\begin{equation*}
  \left\|\frac{1}{\sqrt{n}}X(\Sigmahat+\lambda I_p)^+\right\|^2=\lambda_{\max}\left((\Sigmahat+\lambda I_p)^+\Sigmahat(\Sigmahat+\lambda I_p)^+\right)< \frac{KC_{\max}}{\kappa^2(1-\kappa)^2 C_{\min}^2(1-\sqrt{\gamma})^2}.
\end{equation*}
Therefore, for sufficiently large $n$ and all $\lambda \in \Lambda$, we almost surely obtain
\begin{equation*}
    (i)< \frac{2KC_{\max}^2}{\kappa^2(1-\kappa)^2 C_{\min}^2(1-\sqrt{\gamma})^2}  \frac{1}{\sqrt{n}}\|\delta\| \frac{1}{\sqrt{n}}\|\epsilon \|\leq \frac{C}{\sqrt{n}}\|\delta\|.
\end{equation*}
This implies that the first term of $R_c(\lambda)$ almost surely converges to zero for all $\lambda \in \Lambda$. Furthermore, \cite{pwrt2021} showed that for all $\lambda \in \Lambda$,\begin{equation*}
    \frac{2}{n}\epsilon^TX(\Sigmahat+\lambda I_p)^+\Sigma \{\Sigmahat(\Sigmahat+\lambda I_p)^+-I_p\}\beta_0\xrightarrow{a.s.}0.
\end{equation*}
See the proof of their Lemma 5.1. Therefore, we obtain for all $\lambda \in \Lambda$,
\begin{equation*}
    R_c(\lambda)\xrightarrow{a.s.}0.
\end{equation*}
This completes the proof of this lemma. \par
\end{proof}

\begin{proof}[Proof of Lemma \ref{lem2}]
Because we are defining $\gcv(0):=\lim_{\lambda \rightarrow 0}\gcv(\lambda)$, we can write 
\begin{equation}
    \frac{\gcv_c(\lambda)}{\gcv_d(\lambda)}=\frac{\frac{2}{n}\beta_0^T(\Sigmahat+\lambda I_p)^{+2} X^T\epsilon}{(\fn \tr[(\fn XX^T+\lambda I_n)^+])^2}+\frac{\frac{2}{n}\delta^T(\fn XX^T+\lambda I_n)^{+2}\epsilon}{(\fn \tr[(\fn XX^T+\lambda I_n)^+])^2}, \label{321}
\end{equation}
for all $\lambda \in \Lambda$. \cite{pwrt2021} showed that the first term of $\gcv_c(\lambda)/\gcv_d(\lambda)$ almost surely converges to zero for all $\lambda \in \Lambda$. See the proof of their Lemma 5.2. The second term of $\gcv_c(\lambda)/\gcv_d(\lambda)$ is bounded as follows:
\begin{align*}
   &\frac{2}{n}\left|\frac{\delta^T(\fn XX^T+\lambda I_n)^{+2}\epsilon}{(\fn \tr[(\fn XX^T+\lambda I_n)^+])^2 }\right|
    \leq \frac{2}{n}\left(\lambda_{\max}\left(\fn XX^T\right)+\lambda  \right)^2\left|\delta^T\left(\fn XX^T+\lambda I_n\right)^{+2}\epsilon \right|\\
   &\leq \frac{2}{n}\left(\lambda_{\max}\left(\fn XX^T\right)+\lambda  \right)^2\left\|\left(\fn XX^T+\lambda I_n\right)^{+2}\epsilon\right\|\|\delta\|=(ii),
\end{align*}
for all $\lambda \in \ \Lambda$. For sufficiently large $n$ and all $\lambda \in \Lambda$, we almost surely obtain
\begin{align*}
    \fn \left\|\left(\fn XX^T+\lambda I_n\right)^{+2}\epsilon\right\|^2&=\fn \epsilon ^T\left(\fn XX^T+\lambda I_n\right)^{+4}\epsilon\\
    &< \frac{1}{\kappa^4(1-\kappa)^4 C_{\min}^4(1-\sqrt{\gamma})^8}\fn \|\epsilon\|^2.
\end{align*}
Furthermore, we almost surely obtain $\lambda_{\max}(\fn XX^T)+\lambda<KC_{\max}(1-\sqrt{\gamma})^2+\lambda_{\max}$ for sufficiently large $n$ and all $\lambda \in \Lambda$. Therefore, for sufficiently large $n$ and all $\lambda \in \Lambda$, we almost surely obtain
\begin{equation*}
    (ii)< \frac{2(KC_{\max}(1-\sqrt{\gamma}^2)+\lambda_{\max})^2}{\kappa^2(1-\kappa)^2 C_{\min}^2(1-\sqrt{\gamma})^4}\frac{1}{\sqrt{n}} \|\epsilon\| \frac{1}{\sqrt{n}}\|\delta\| \leq \frac{C}{\sqrt{n}}\|\delta\|.
\end{equation*}
This implies that the second term of $\gcv_c(\lambda)/\gcv_d(\lambda)$ almost surely coverages to zero for all $\lambda \in \Lambda$. Therefore, we obtain
\begin{equation*}
    \frac{\gcv_c(\lambda)}{\gcv_d(\lambda)}\xrightarrow{a.s.}0,
\end{equation*}
for all $\lambda \in \Lambda$. This completes the proof of this lemma.

\end{proof}

\begin{proof}[Proof of Lemma \ref{lem3}]
Owing to the definition of $\gcv(0):=\lim_{\lambda \rightarrow 0}\gcv(\lambda)$, we can write
\begin{equation*}
    R_b(\lambda)-\frac{\gcv_b(\lambda)}{\gcv_d(\lambda)}=a_\lambda+b_\lambda+c_\lambda,
\end{equation*}
where \begin{align*}
   a_\lambda&=\beta_0^T\{\Sigmahat(\Sigmahat+\lambda I_p)^{+}-I_p\}\Sigma\{(\Sigmahat+\lambda I_p)^{+}\Sigmahat-I_p\}\beta_0-\frac{\fn \beta_0^TX^T(\fn XX^T+\lambda I_n)^{+2}X\beta_0}{(\fn \tr[(\fn XX^T+\lambda I_n)^+] )^2}, \\
     b_\lambda&=\expect(\delta_0^2)+\fn \delta^TX(\Sigmahat+\lambda I_p)^{+}\Sigma(\Sigmahat+\lambda I_p)^{+}\fn X^T\delta-\frac{\fn \delta^T(\fn XX^T+\lambda I_n)^{+2}\delta}{(\fn \tr[(\fn XX^T+\lambda I_n)^+] )^2},\\
     c_\lambda&=2 \beta_0^T\{\Sigmahat(\Sigmahat+\lambda I_p)^{+}-I_p\}\Sigma(\Sigmahat+\lambda I_p)^{+}\fn X^T\delta-\frac{\frac{2}{n}\delta^T(\fn XX^T+\lambda I_n)^{+2}X\beta_0}{(\fn \tr[(\fn XX^T+\lambda I_n)^+])^2},
\end{align*}
for all $\lambda \in \Lambda$. \cite{pwrt2021} showed that $a_\lambda$ almost surely converges to zero for all $\lambda \in \Lambda$. See the proof of their Lemma 5.3. We first evaluate $b_\lambda$. $\expect(\delta_0^2)=o(1)$ by Assumption 4. The second term of $b_\lambda$ is bounded as follows:
\begin{align*}
 \fn \delta^TX (\Sigmahat +\lambda I_p)^+\Sigma (\Sigmahat +\lambda I_p)^+\fn X^T\delta &\leq \lambda_{\max}(\Sigma)  \fn \delta^TX(\Sigmahat+\lambda I_p)^{+2}X^T\fn \delta\\
  &\leq C_{\max} \lambda_{\max}\left(\fn X(\Sigmahat+\lambda I_p)^{+2}X^T\right) \fn \|\delta\|^2=(iii),
\end{align*}
for all $\lambda \in \Lambda$. Let 
\begin{equation*}
    \frac{1}{\sqrt{n}}X=UQ_0V^T
\end{equation*}
be a singular value decomposition of $X/\sqrt{n}$, where $U \in \mathbb{R}^{n\times n}$ and $V \in \mathbb{R}^{p \times p}$ are orthogonal matrices, and $Q_0\in \mathbb{R}^{n \times p}$ is the matrix whose $j$th diagonal element is $\sqrt{\lambda_j}$ for $j=1,\ldots,n$, and all non-diagonal elements are zero. Recall that $\lambda_j$, $j=1,\ldots,n$ are eigenvalues of $\Sigmahat$, and $V$ is the same as in \eqref{2286}. We can now write
\begin{equation*}
\fn X(\Sigmahat+\lambda I_p)^{+2}X^T=U\diag\left(\frac{\lambda_1}{(\lambda_1+\lambda)^2},\ldots,\frac{\lambda_n}{(\lambda_n+\lambda)^2}\right)U^T,
\end{equation*}
for all $\lambda \in \Lambda$. Thus, for sufficiently large $n$ and all $\lambda \in \Lambda$, we almost surely obtain  
\begin{equation*}
\lambda_{\max}\left(\fn X(\Sigmahat+\lambda I_p)^{+2}X^T\right)<\frac{KC_{\max}}{\kappa^2(1-\kappa)^2 C_{\min}^2(1-\sqrt{\gamma})^2}.
\end{equation*}
Therefore, for sufficiently large $n$ and all $\lambda \in \Lambda$, we almost surely obtain
\begin{equation*}
    (iii)<\frac{KC_{\max}^2}{\kappa^2(1-\kappa)^2 C_{\min}^2(1-\sqrt{\gamma})^2}\fn \|\delta\|^2.
\end{equation*}
This implies that the second term of $b_\lambda$ almost surely converges to zero for all $\lambda \in \Lambda$. The third term of $b_\lambda$ is almost surely bounded as follows:
\begin{equation*}
  \fn \frac{\delta^T\left(\fn XX^T+\lambda I_n\right)^{+2}\delta}{ \left(\fn\tr\left[ (\fn XX^T+\lambda I_n)^{+}\right]\right)^2}\\
    <\frac{K^2C_{\max}^2}{\kappa^2(1-\kappa)^2C_{\min}^2(1-\sqrt{\gamma})^2}\fn \|\delta\|^2,
\end{equation*}
for sufficiently large $n$ and all $\lambda \in \Lambda$. This implies that the third term of $b_\lambda$ almost surely converges to zero for all $\lambda \in \Lambda$. Thus, we obtain $b_\lambda \xrightarrow{a.s.}0$ for all $\lambda \in \Lambda$. \par

Next, we evaluate $c_\lambda$. The first term of $c_\lambda$ is bounded as follows:
\begin{align*}
   & \frac{2}{n} \left|\beta_0^T\{\Sigmahat(\Sigmahat+\lambda I_p)^{+}-I_p\}\Sigma(\Sigmahat+\lambda I_p)^{+}X^T\delta\right| \\
   &\leq \frac{2}{n} \left\|X(\Sigmahat+\lambda I_p)^+\Sigma \{\Sigmahat (\Sigmahat+\lambda I_p)^+-I_p\}\beta_0\right\|\|\delta\|=(iv),
\end{align*}
for all $\lambda \in \ \Lambda$. Note that for all $\lambda \in \Lambda$,
\begin{align*}
  &\frac{1}{\sqrt{n}}\left\|X(\Sigmahat+\lambda I_p)^+\Sigma \{\Sigmahat (\Sigmahat+\lambda I_p)^+-I_p\}\beta_0\right\| \\
  &\leq \lambda_{\max}(\Sigma)\left\|\frac{1}{\sqrt{n}}X(\Sigmahat+\lambda I_p)^+\right\| \|\Sigmahat (\Sigmahat+\lambda I_p)^+-I_p\|\|\beta_0\|,
\end{align*}
and 
\begin{equation*}
    \|\Sigmahat (\Sigmahat+\lambda I_p)^+-I_p\|\leq \|\Sigmahat (\Sigmahat+\lambda I_p)^+\|+1.
\end{equation*}
For sufficiently large $n$ and all $\lambda\in \Lambda$, we almost surely have
\begin{equation*}
  \|\Sigmahat(\Sigmahat+\lambda I_p)^+\|=\sqrt{\lambda_{\max}((\Sigmahat+\lambda I_p)^+\Sigmahat^2(\Sigmahat+\lambda I_p)^+)} < \frac{KC_{\max}}{\kappa(1-\kappa)C_{\min}}.
\end{equation*}
Therefore, for sufficiently large $n$ and all $\lambda\in \Lambda$, we almost surely obtain  
\begin{equation*}
    (iv)<\frac{\sqrt{K C_f}C_{\max}^2(K C_{\max}+\kappa(1-\kappa)C_{\min})}{C_{\min}^3|1-\sqrt{\gamma}|\kappa^2(1-\kappa)^2}\frac{1}{\sqrt{n}}\|\delta\|. 
\end{equation*}
This implies that the first term of $c_\lambda$ almost surely converges to zero for all $\lambda \in \Lambda$. The numerator of the second term of $c_\lambda$ is evaluated as follows:
\begin{equation*}
    \frac{2}{n}\left|\delta^T\left(\fn XX^T+\lambda I_n\right)^{+2}X\beta_0\right|
     \leq \frac{2}{n}\|\delta\|\left\|\left(\fn XX^T+\lambda I_n\right)^{+2}X\beta_0\right\|, 
\end{equation*}
by the Cauchy--Schwarz inequality. For sufficiently large $n$ and all $\lambda \in \Lambda$, we almost surely obtain 
\begin{align*}
    &\fn \left\|\left(\fn XX^T+\lambda I_n\right)^{+2}X\beta_0\right\|^2=\fn \beta_0^TX^T\left(\fn XX^T+\lambda I_n\right)^{+4}X\beta_0\\
    &\leq \left\|\left(\fn XX^T+\lambda I_n\right)^{+4}\right\|\beta_0^T\Sigmahat\beta_0\leq \lambda_{\max}(\Sigmahat)\left\|\left(\fn XX^T+\lambda I_n\right)^+\right\|^4\|\beta_0\|^2\\
    &<\frac{ K C_{\max}^2C_f}{\kappa^4(1-\kappa)^4C_{\min}^6(1-\sqrt{\gamma})^2 }.
\end{align*}
Therefore, we almost surely obtain 
\begin{equation*}
    \left|\frac{\frac{2}{n}\delta^T(\fn XX^T+\lambda I_n)^{+2}X\beta_0}{\left(\fn\tr\left[ (\fn XX^T+\lambda I_n)^{+}\right]\right)^2}\right|<\frac{\sqrt{KC_f}C_{\max}(KC_{\max}(1-\sqrt{\gamma})^2+\lambda_{\max})^2}{\kappa^2(1-\kappa)^2 C_{\min}^3|1-\sqrt{\gamma}|}  \frac{2}{\sqrt{n}} \|\delta\|,
\end{equation*}
for sufficiently large $n$ and all $\lambda \in \Lambda$. This implies that the second term of $c_\lambda$ almost surely converges to zero for all $\lambda \in \Lambda$. Thus, we obtain $c_\lambda \xrightarrow{a.s.}0$ for all $\lambda \in \Lambda$. This completes the proof of this lemma.
\end{proof}

\begin{proof}[Proof of Theorem \ref{thm1}]
    By Lemmas \ref{lem1}--\ref{lem4}, we obtain
\begin{align*}
    R(\lambda)-\gcv(\lambda)&= R_b(\lambda)-\frac{\gcv_b(\lambda)}{\gcv_d(\lambda)}+R_v(\lambda)-\frac{\gcv_v(\lambda)}{\gcv_d(\lambda)}+ R_c(\lambda)-\frac{\gcv_c(\lambda)}{\gcv_d(\lambda)}\\
    &\xrightarrow{a.s.}0,
\end{align*}
for all $\lambda \in \Lambda$. Therefore, the pointwise convergence holds on $\Lambda$. To show the uniform convergence on $\Lambda$, it is sufficient to show that $R(\lambda)$, $\gcv(\lambda)$, and their derivatives are bounded for sufficiently large $n$ and all $\lambda \in \Lambda$, as in \cite{pwrt2021}. \par

We first evaluate $\gcv(\lambda)$. As in \cite{pwrt2021}, let us define 
\begin{align*}
    r_n(\lambda)&=\fn Y^T\left(\fn XX^T+\lambda I_n\right)^{+2}Y,\\ 
    v_n(\lambda)&=\left(\tr\left[\left(\fn XX^T+\lambda I_n\right)^{+}\fn \right]\right)^2.
\end{align*}
Because we are defining $\gcv(0):=\lim_{\lambda \rightarrow 0}\gcv(\lambda)$, we can write 
\begin{equation*}
    \gcv(\lambda)=\frac{r_n(\lambda)}{v_n(\lambda)},
\end{equation*}
for all $\lambda \in \Lambda$. Note that 
\begin{equation*}
    \fn \|Y\|^2=\fn \sum_{i=1}^nf(x_i)^2 +\frac{2}{n}\sum_{i=1}^nf(x_i)\epsilon_i +\fn\|\epsilon\|^2,
\end{equation*}
and the strong law of large numbers yields $\fn \sum_{i=1}^nf(x_i)^2\xrightarrow{a.s.}\expect[\{f(x_i)\}^2]\leq C_f$, 
$\fn \sum_{i=1}^nf(x_i)\epsilon_i\xrightarrow{a.s.}\expect[f(x_i)\epsilon_i]=0$, and 
$\fn \|\epsilon\|^2 \xrightarrow{a.s}\sigma^2$. They imply that $\fn \|Y\|^2$ is almost surely bounded for sufficiently large $n$. Thus, we almost surely obtain 
 \begin{equation*}
        |\gcv(\lambda)|\leq \frac{(\lambda_{\max}(\fn X^TX)+\lambda)^2}{(\lambda_{\min}(\fn XX^T)+\lambda)^2}\fn \|Y\|^2< \frac{(KC_{\max}(1-\sqrt{\gamma})^2+\lambda_{\max})^2}{\kappa^2(1-\kappa)^2 C_{\min}^2(1-\sqrt{\gamma})^4} \fn \|Y\|^2 \leq C,
    \end{equation*}
for sufficiently large $n$ and all $\lambda \in \Lambda$. We next evaluate the derivative of $\gcv(\lambda)$, which is given by
\begin{equation*}
    \gcv'(\lambda)=\frac{r_n'(\lambda)v_n(\lambda)-r_n(\lambda)v_n'(\lambda)}{v_n(\lambda)^2}.
\end{equation*}
Simple calculation yields 
\begin{align*}
    |r_n'(\lambda)|&\leq \frac{2}{n} \frac{\|Y\|^2 }{|\lambda_{\min}\left(\fn XX^T\right)+\lambda|^3},\\
    |v_n'(\lambda)|&=\frac{2}{n^2}\left|\sum_{k=1}^n \frac{1}{\lambda_k+\lambda}\sum_{k=1}^n \frac{1}{(\lambda_k+\lambda)^2}\right|\leq \frac{2}{|\lambda_{\min}\left(\fn X X^T)+\lambda\right|^3}.
\end{align*}
Therefore, we almost surely obtain  
\begin{equation*}
    |\gcv(\lambda)'|\leq \frac{\left(\lambda_{\max}(\fn XX^T)+\lambda\right)^2}{|\lambda_{\min}(\fn XX^T)+\lambda|^5}\frac{4}{n}\|Y\|^2<\frac{4(KC_{\max}(1-\sqrt{\gamma})^2+\lambda_{\max})^2}{\kappa^5(1-\kappa)^5C_{\min}^5(1-\sqrt{\gamma})^{10}}\fn \|Y\|^2 \leq C,
\end{equation*}
for sufficiently large $n$ and all $\lambda \in \Lambda$. \par 

We next evaluate $R(\lambda)$ and $R(\lambda)'$. We can write
\begin{equation*}
    R(\lambda)=\expect(y_0^2)-2\expect[f(x_0)x_0^T]\betahat_\lambda+\betahat_\lambda \Sigma \betahat_\lambda. 
\end{equation*}
We obtain 
\begin{equation*}
    \expect(y_0^2)=\expect[f(x_0)^2]+\sigma^2\leq C_f+\sigma^2.
\end{equation*}
The Cauchy--Schwarz inequality yields $|\expect[f(x_0)x_0^T]\betahat_\lambda|\leq \|\expect[f(x_0)x_0]\|\|\betahat_\lambda\|$. We obtain
\begin{align*}
\|\expect[f(x_0)x_0]\|=\sup_{\|a\|=1}|\expect[f(x_0)a^Tx_0]|\leq \sqrt{\expect[f(x_0)^2]} \sqrt{\sup_{\|a\|=1}\expect[(a^Tx_0)^2]} \leq \sqrt{C_fC_{\max}}.
\end{align*}
For sufficiently large $n$ and all $\lambda \in \Lambda$, we almost surely obtain 
\begin{align*}
  \|\betahat_\lambda\|^2&=\frac{1}{n^2}Y^TX(\Sigmahat+\lambda I_p)^{+2}X^TY\leq \lambda_{\max}\left(\fn X(\Sigmahat+\lambda I_p)^{+2}X^T\right) \fn \|Y\|^2 \\
  &<\frac{KC_{\max}}{\kappa^2(1-\kappa)^2 C_{\min}^2(1-\sqrt{\gamma})^2} \fn \|Y\|^2\leq C.
\end{align*}
This implies that the second term and the third term in $R(\lambda)$ are almost surely bounded for sufficiently large $n$ and all $\lambda \in \Lambda$. Thus, $R(\lambda)$ is almost surely bounded for sufficiently large $n$ and all $\lambda \in \Lambda$.\par $R(\lambda)'$ is given by 
\begin{equation*}
    R(\lambda)'=2\expect[f(x_0)x^T_0](\Sigmahat+\lambda I_p)^{+2}\fn X^TY-\frac{2}{n^2}Y^TX(\Sigmahat+\lambda I_p)^+\Sigma(\Sigmahat+\lambda I_p)^{+2}\fn X^TY.
\end{equation*}
We almost surely obtain
\begin{align*}
&\frac{2}{n}|\expect[f(x_0)x^T_0](\Sigmahat+\lambda I_p)^{+2} X^TY|\leq \frac{2}{n}\|\expect[f(x_0)x_0]\| \|(\Sigmahat+\lambda I_p)^{+2}X^TY\|\\
&\leq 2 \sqrt{C_fC_{\max}} \left\|\frac{1}{\sqrt{n}}X(\Sigmahat+\lambda I_p)^{+2}\right\| \frac{1}{\sqrt{n}}\|Y\|<\frac{2\sqrt{C_fK}C_{\max}}{\kappa^2(1-\kappa)^2C_{\min}^2|1-\sqrt{\gamma}|^3} \frac{1}{\sqrt{n}}\|Y\|\leq C,
\end{align*}
and
\begin{align*}
&\frac{2}{n^2}|Y^TX(\Sigmahat+\lambda I_p)^+\Sigma(\Sigmahat+\lambda I_p)^{+2} X^TY|\leq \frac{1}{n^2}\|Y\| \|X(\Sigmahat+\lambda I_p)^+\Sigma(\Sigmahat+\lambda I_p)^{+2} X^TY\|\\
&\leq \lambda_{\max}(\Sigma)\fn \|Y\|^2 \left\|\frac{1}{\sqrt{n}} X(\Sigmahat+\lambda I_p)^+\right\| \left\|\frac{1}{\sqrt{n}}X(\Sigmahat+\lambda I_p)^{+2}\right\|\\
&<\frac{KC_{\max}^2}{\kappa^3(1-\kappa)^3 C_{\min}^3 (1-\sqrt{\gamma})^4}\fn \|Y\|^2 \leq C,
\end{align*}
for sufficiently large $n$ and all $\lambda \in \Lambda$. Therefore, $R(\lambda)'$ is almost surely bounded for sufficiently large $n$ and all $\lambda \in \Lambda$. \par

We have shown that $\gcv(\lambda)-R(\lambda)$ and its derivative with respect to $\lambda$ are almost surely bounded for sufficiently large $n$ and all $\lambda \in \Lambda$. This implies that $\gcv(\lambda)-R(\lambda)$ is equicontinous on $\Lambda$. Thus, the Arzela--Ascoli theorem implies the uniform convergence on $\Lambda$. This completes the proof of this theorem. 
\end{proof}

\bibliographystyle{apalike} 
\bibliography{ridge}
\end{document}